\newcommand{\eps}{\varepsilon}
\newtheorem{thm}{Theorem}[section]
\newtheorem{prop}[thm]{Proposition}
\numberwithin{equation}{section}
\newtheorem{Algo}{Algorithm}
\begin{document}
\title{The first-passage time of the Brownian motion to a
curved boundary: an algorithmic approach\footnote{This work has been supported by the Agence National de la Recherche 
through the ANR Project MANDy ``Mathematical Analysis of Neuronal Dynamics'', ANR-09-BLAN-0008-01.}}
\author{Samuel Herrmann$^1$ and Etienne Tanr\'e$^2$}
\footnotetext[1]{Institut de Math\'ematiques de Bourgogne, 
IMB UMR5584, CNRS, Universit\'e de Bourgogne Franche-Comt\'e, F-21000 Dijon, France.
\texttt{Samuel.Herrmann@u-bourgogne.fr}}
\footnotetext[2]{Inria, Equipe Projet TOSCA, 2004 route des Lucioles, BP93,
06902 Sophia-Antipolis, France,  \texttt{Etienne.Tanre@inria.fr}}
 \date{\today}
\maketitle

\begin{abstract} Under some weak conditions, the first-passage time of the Brownian motion to a continuous curved 
boundary is an almost surely finite stopping time. Its probability density function (pdf) is explicitly known only in few particular cases. 
Several mathematical studies proposed to approximate the pdf in a quite general framework or even to simulate 
this hitting time using a discrete time approximation of the Brownian motion. The authors study a new algorithm 
which permits to simulate the first-passage time using an iterating procedure. The convergence rate presented in 
this paper suggests that the method is very efficient. 
\end{abstract}
\textbf{Key words and phrases:} first-passage time, Brownian motion, potential theory, randomized algorithm.\par\medskip

\noindent \textbf{2010 AMS subject classifications:} primary 65C05;
secondary 65N75, 60G40. \par\medskip

\section*{Introduction}
Modeling biological or physical systems often requires handling one-dimensional diffusion processes. The marginal probability distribution of such processes, at a fixed time, permits 
a quite precise description of the model. Nevertheless, in many applications, this information is insufficient and the description of the whole path becomes crucial. This is namely the case for a variety of problems related to neuronal sciences, financial derivatives with barriers, ruin probability of an insurance fund, optimal stopping problems,... In these frameworks, the main task is the description of the first passage time densities for time-dependent boundaries. Let us just mention some references in engineering reliability \cite{Ebrahimi-2005}, epidemiology \cite{Tuckwell-Wan-2000}, biology \cite{Ricciardi-al-1999}, mathematical finance \cite{Garrido-1989, Roberts-al-1997, Novikov-al-2003} and references concerning the framework of level-crossing problems \cite{Abrahams, Blake}.

For instance, let us focus our attention on a simple interpretation of neural transmission. When a neuron is stimulated by pressure, heat, light, or chemical information, its membrane voltage changes as time elapses and, as soon as it reaches a constant threshold, the depolarization phenomenon occurs and the voltage is reset to a resting potential. The family of integrate-and-fire spiking neuron models is based on this simple interpretation. The firing time therefore corresponds to the first-passage time of the membrane potential, represented by a stochastic mean-reverting process (usually the Ornstein-Uhlenbeck process) to the neural threshold (Giorno et al. \cite{Giorno-al-1988}, Lansky et al. \cite{Lansky-1995}, Wan and Tuckwell \cite{Wan-Tuckwell-1982}, for an introduction to noise in the nervous system see Part I Chapter 5 in \cite{Gerstner}, for the integrate-and-fire model see Chapter 10 in \cite{Ermentrout}).

Our main motivation is to emphasize an algorithmic approach in order to approximate the first-passage time of the Brownian motion to curved boundaries. The field of application of such an algorithm at a first glance may appear as quite restrictive since it concerns the Brownian motion but in fact a lot of families of diffusion processes are concerned. Indeed it is possible to express various stochastic paths as functions of Brownian paths in the spirit of Wang and P\"{o}tzelberger \cite{Wang-Potzelberger-2007}. 
Hence using simple time transformations, we are going to present an application of the results to the Ornstein-Uhlenbeck process (see Section~\ref{sec:examples}).

In order to describe approximations of the first-passage time of the Brownian motion, 
we assume that this stopping time is almost surely finite. 
In this way, we introduce particular conditions for this property to be satisfied. 
Let us consider a continuous function $\varphi:\mathbb{R}_+\to\mathbb{R}$ 
satisfying the following hypothesis:
\begin{equation}
\label{hypo}
\varphi(0)>0\quad\mbox{and}\quad \limsup_{t\to\infty}\frac{\varphi(t)}{\sqrt{2t\log\log t}} <1.\tag{H1}
\end{equation}
We then define the hitting time
\begin{equation}
\label{def:tau}
\tau_\varphi=\inf\{t>0:B_t=\varphi(t)\}
\end{equation}
where $(B_t,\,t\ge 0)$ stands for a standard one-dimensional Brownian motion. Under \eqref{hypo}, the a.s. finiteness of $\tau_\varphi$ is an obvious consequence of the law of the iterated logarithm (see e.g.\cite[Th.9.23 p.112]{karatzas}). It is quite difficult to obtain precise information about this stopping time in general situations. 

The study of the approximation of the hitting times for Brownian motion and general Gaussian Markov processes is an active area of research. Several alternatives for dealing with the characterization of hitting times exist. 

\subsubsection*{Approximation of the probability density function}

For particular cases, the probability density function of the Brownian passage time can be computed explicitly. Lerche \cite{Lerche} used the method of images in order to obtain explicit expressions of the p.d.f. $p$ defined by $p(t)\,dt=\mathbb{P}(\tau_\varphi\in dt)$. However only few cases are concerned by such a study.

Durbin \cite{Durbin85, Durbin92} proposed to approximate the first-passage distribution $p(t)$ of the Brownian motion as follows: $p$ can be represented by an expansion
\[
p(t)=\sum_{j=1}^k (-1)^{j-1}q_j(t)+(-1)^k r_k(t),\quad k\ge 1,
\]
where $q_j$ for $1\le j\le k$ and $r_k$ are defined by multiple integrals depending on the boundary $\varphi$. The approximation simply consists in truncating the expansion. 
Let us note that the first term corresponds in fact to the tangent approximation of Strassen \cite{Strassen} and Daniels \cite{Daniels}, see also \cite{Ferebee-1982}. The convergence of the series and the error bounds
can be made precise if the curved boundary is wholly concave or wholly convex. 
Many studies concern such a series expansion, let us mention a few: Ferebee \cite{Ferebee-1983}, Ricciardi et al. \cite{Ricciardi-al-1984}; Giorno et al. \cite{Giorno-al-1989},
Sacerdote and Tomassetti \cite{Sacerdote-Tomassetti-1996} to deal with more general diffusion processes. The numerical approach proposed in \cite{Buonocore-1987} seems to be particularly efficient. In \cite{Nardo-al-2001}, the authors proposed a comparison between the approximation developed by Durbin and an other numerical resolution of the Volterra equation for Gaussian processes.

One method in approximating the passage time of a Brownian motion, or even of a quite general diffusion, through a curved boundary is to replace the initial boundary by an other one which is close and which leads to an explicit expression of the hitting time probability. Such method permits to obtain some bounds. It was first introduced for the Brownian motion in \cite{Borovkov-Novikov-2005}, 
and applied for instance to piecewise continuous boundaries \cite{Wang-Potzelberger-2007}.
Finally an other method consists in writing the p.d.f of the hitting time as the expectation of a particular functional of a three-dimensional Brownian bridge. It suffices then to approximate this expectation through a Monte Carlo method \cite{Ichiba-2011}.
 
\subsubsection*{Approximation of the first passage time.}

All methods described so far concern the approximation of the pdf. It can be of particular interest to simulate directly the first passage time $\tau_\varphi$ or to compute the probability for the hitting time to be smaller than some given $T>0$, without computing the pdf. The solution consists in using a time discretization of the Brownian motion on $[0,T]$. The time interval is then split into $n$ small intervals of the kind $[(k-1)T/n,kT/n]$, with $1\le k\le n$. 
It is therefore possible just to simulate the hitting time of the corresponding Euler scheme. Of course this should upper-bound the stopping time. One solution to overcome this problem is to improve the algorithm by shifting the boundary to reach: we stop the Euler scheme as soon as it exits from a suitable smaller domain. Let us note that this general procedure can also be applied to diffusion processes. It has been first introduced for geometrical Brownian motion (finance) in \cite{Broadie-Glasserman-Kou-1997} and then extended to general diffusions with \emph{nice} coefficients in \cite{Gobet-Menozzi-10}. 

An other method in order to improve the approximation of the hitting time consists in testing, at each endpoint $kT/n$, if the event $B_{kT/n}<\varphi(kT/n)$ is satisfied and if the Brownian path on the small intervals,
conditionally on its value at the end point, hits the curved boundary. This method can also be applied to diffusions and needs therefore precise asymptotics of hitting probabilities for pinned diffusions. A first important study in that direction is \cite{Gobet-2000} where the coefficients of the diffusion are frozen at the starting point on each small interval leading to asymptotics of the probabilities. Nevertheless the method can become onerous if the observed time interval $[0,T]$ is large and sometimes gives incorrect asymptotics: it has been pointed out, by the numerical treatment of some precise examples, that the approximations produced by this method can be far from the true ones. See for this point Giraudo and Sacerdote \cite{Giraudo-Sacerdote-1999} (O.U. process and Feller model),
who also suggest some formulas for the computation of the crossing probability, see also \cite{Giraudo-al-2001}. Baldi and Caramellino \cite{Baldi-Caramellino-2002} presents precise asymptotics for general pinned diffusions which permits to improve the approximation of hitting times. Such results can be developed further in the particular gaussian framework for any dimension \cite{Caramellino-Pacchiarotti-Salvadei-2015}.

\subsubsection*{A new algorithm.}

The aim of this study is to present a new method of approximation of $\tau_\varphi$. Let us explain intuitively the simulation procedure. If $\varphi$ is an increasing curve with $\varphi(0)>0$, then the Brownian motion needs to successively cross a sequence of imaginary horizontal lines before hitting the boundary. The first line to cross corresponds to the value $\varphi(0)$ and needs a random time denoted by $\mathcal{T}_1$. At that time, the value of the curved boundary is $\varphi(\mathcal{T}_1)$. The Brownian motion therefore needs to cross this second horizontal line, it shall happen at time $\mathcal{T}_2$ and the new horizontal line to cross becomes $\varphi(\mathcal{T}_2)$ and so on... Figure \ref{fig:description} (left) illustrates this procedure. The sequence of stopping times $(\mathcal{T}_n)$ converges towards $\tau_\varphi$ and will be used in order to obtain an approximation. We shall introduce a stopping procedure in this sequence of random times which depends on a small parameter $\epsilon$ associated to the error size of the approximation: the sequence is stopped as soon as the distance between two successive horizontal lines is smaller than $\epsilon$. The outcome of the algorithm corresponds therefore to a random variable $\tau_\varphi^\epsilon$ which can be exactly simulated
and such that $\tau_\varphi^{\epsilon}$ converges toward $\tau_\varphi$ in distribution as $\epsilon$ tends to $0$.

The algorithm can be modified when the curved boundary $\varphi$ does not satisfy the monotonic property anymore. In such a slightly different context, it suffices to tilt the successive imaginary horizontal lines in such a way that the common slope corresponds to $\inf_{t\ge 0}\varphi'(t) $, see Figure \ref{fig:description} (right).

\begin{figure}[h]
\centerline{\includegraphics[width=0.48\textwidth]{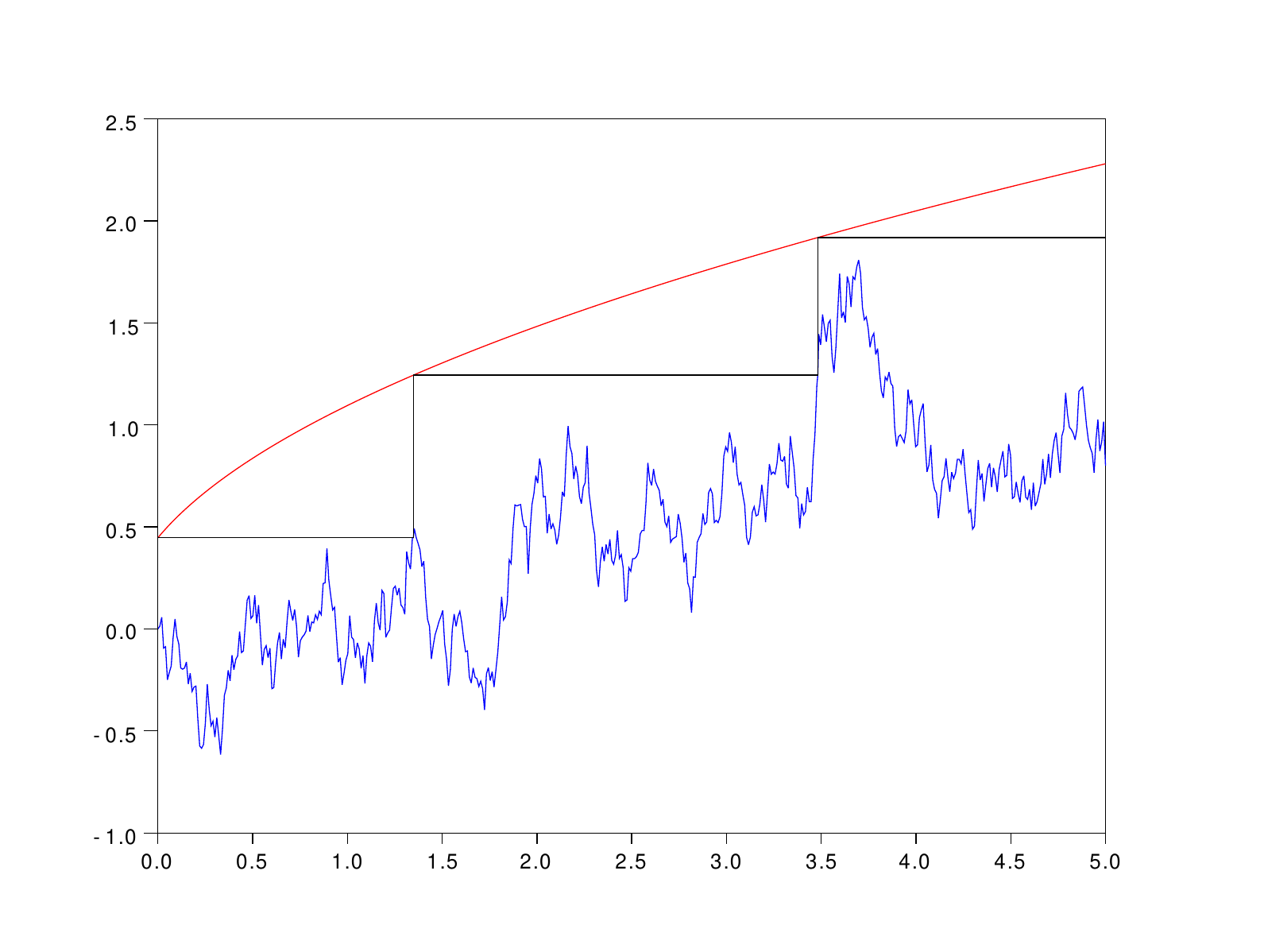}
\includegraphics[width=0.5\textwidth]{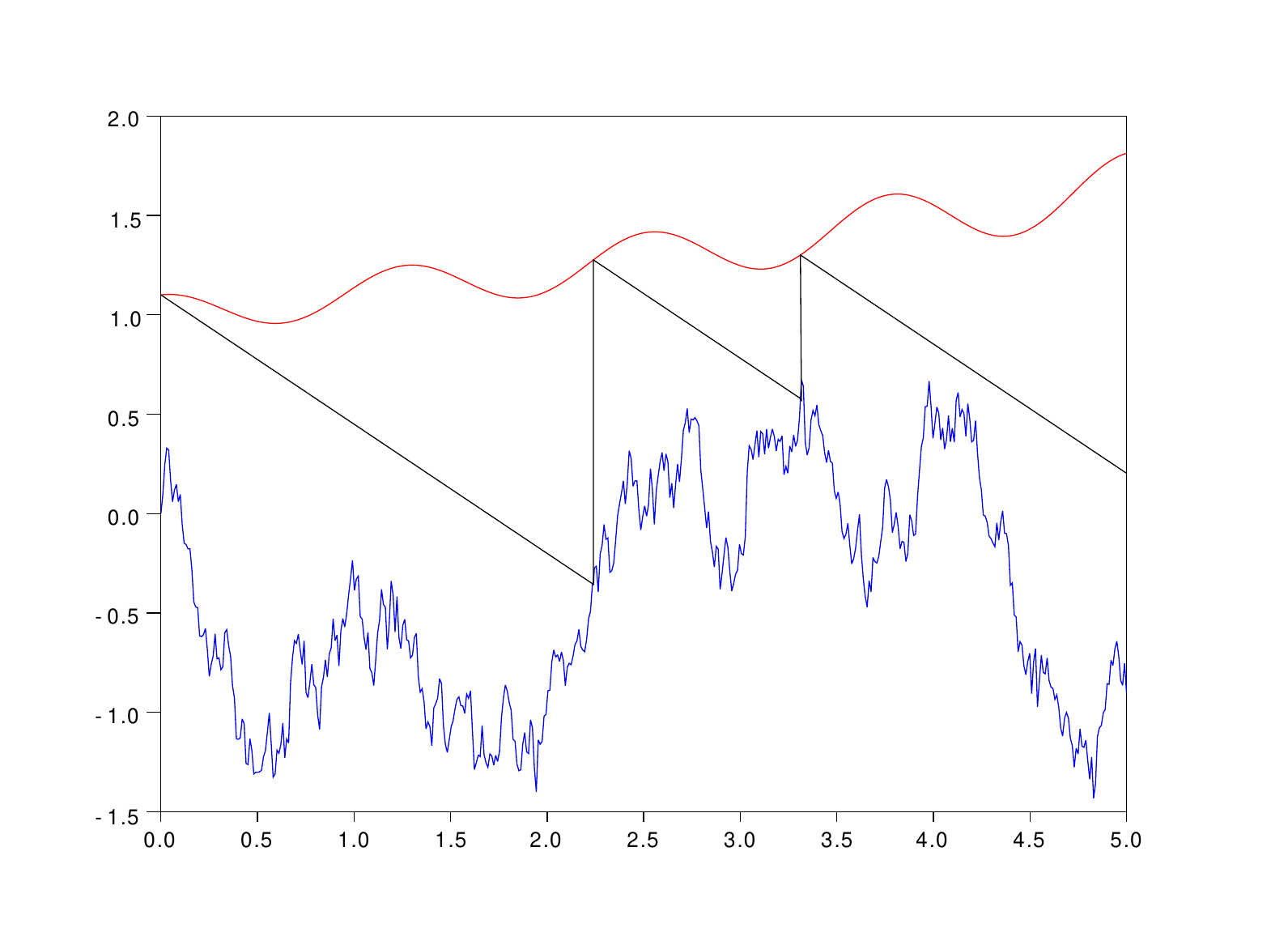}}
\caption{Illustration of the algorithm for an increasing boundary $\varphi$ with its associated successive horizontal lines (left) and for a general boundary (right).}
\label{fig:description}
\end{figure}

To sum up, two different families of sequences will be developed  and the associated 
convergence rates are estimated. The first algorithm developed in Section \ref{sec:incre} concerns increasing curved boundaries and the second one, Section \ref{sec:gen}, permits 
us to deal with quite general boundaries provided that its derivative is bounded. In the last section, we present different examples in order to illustrate the algorithm efficiency. 

\section{First-passage time to non-decreasing boundaries}\label{sec:incre}
Let us assume that the boundary $\varphi$ satisfies \eqref{hypo} and that the following additional conditions hold
\begin{equation}
\label{hypo+}
\varphi:\mathbb{R}_+\to\mathbb{R}\quad\mbox{is a non-decreasing }\ \mathcal{C}^1\mbox{-continuous function,}\tag{H2}
\end{equation}
\begin{equation}
\label{hypo++}
2\varphi'(t)\sqrt{1+t}\le 1,\quad \forall t\ge 0.\tag{H3}
\end{equation}
We introduce the algorithm associated to the hitting time $\tau_\varphi$ defined by \eqref{def:tau}.
\begin{Algo}
\label{algo1}
Let $\epsilon>0$ be a small parameter and $(G_n)_{n\ge 0}$ a sequence of independent standard Gaussian distributed random variables.\\ Initialization: $T_0=0$, $T_1=(\varphi(0)/G_0)^2$ and $\mathcal{N}_\epsilon=1$.\\
While $\varphi(T_1)-\varphi(T_0)>\epsilon$ do: 
\begin{eqnarray}\label{eq:algo1}
\left\{\begin{array}{l}
(T_0,T_1)\leftarrow \Big(T_1,T_1+(\varphi(T_1)-\varphi(T_0))^2/G_{\mathcal{N}_\epsilon}^2\Big)\\
\mathcal{N}_\epsilon\leftarrow \mathcal{N}_\epsilon+1.
\end{array}\right.
\end{eqnarray}
Outcome: $\tau_{\varphi}^\epsilon\leftarrow T_1$ and $\mathcal{N}_\epsilon$.
\end{Algo}
Let us just note that Algorithm \ref{algo1} is very simple to use since each step only requires one Gaussian distributed random variable. Moreover it is a approximation of the first-passage time:
\begin{thm}\label{thm1}\begin{enumerate}
\item Let us assume that the boundary function $\varphi$ satisfies \eqref{hypo}, \eqref{hypo+} and \eqref{hypo++} then the random variable $\tau_\varphi^\epsilon$ defined in Algorithm \ref{algo1} converges in distribution towards $\tau_\varphi$ defined by \eqref{def:tau} as $\epsilon$ tends to zero. More precisely
\begin{equation}\label{eq:thm:1}
F_\epsilon(t-\epsilon)-\frac{3\sqrt{\epsilon}}{\sqrt{2\pi}}\le F(t)\le F_\epsilon(t),\quad\mbox{for any}\quad t\ge \epsilon,
\end{equation}
where $F$ (resp. $F_\epsilon$) is the cumulative distribution function of $\tau_\varphi$ (resp. $\tau_\varphi^\epsilon$).
\item There exists a constant $C>0$ such that the random number of iterations $\mathcal{N}_\epsilon$ defined in Algorithm \ref{algo1} satisfies:
\begin{equation}
\label{eq:thm:conv1}
\mathbb{E}[\mathcal{N}_\eps]\le C\sqrt{|\log \epsilon|}.
\end{equation}
 \end{enumerate}
\end{thm}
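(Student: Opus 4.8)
The plan is to analyze the sequence $(T_0, T_1)$ produced by the iteration as a sequence of stopping times for the underlying Brownian motion, and to track two quantities: the "height" $h_n := \varphi(T_1^{(n)}) - \varphi(T_0^{(n)})$ of the $n$-th horizontal strip, and the cumulative time $T_1^{(n)}$. First I would observe that each step of \eqref{eq:algo1} is exactly the simulation of the first hitting time of a level $h_n$ by a fresh Brownian motion started at $0$: indeed, if $\sigma_a = \inf\{t : B_t = a\}$ then $\sigma_a \overset{d}{=} (a/G)^2$ for $G$ standard Gaussian, by the well-known density $\mathbb{P}(\sigma_a \in dt) = \frac{a}{\sqrt{2\pi t^3}} e^{-a^2/(2t)}\,dt$. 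By the strong Markov property and independence of the $G_n$, the random time $T_1$ at the end of the $n$-th iteration has the law of the $n$-th passage time $\mathcal{T}_n$ in the heuristic description: the Brownian motion successively crosses the levels $\varphi(0), \varphi(T_1^{(1)}), \varphi(T_1^{(2)}), \dots$, so $T_1^{(n)} = \mathcal{T}_n$ in distribution (and one can couple them on the same probability space). Since $\varphi$ is non-decreasing, each $\mathcal{T}_n \le \tau_\varphi$, the sequence $\mathcal{T}_n$ is non-decreasing, and $\mathcal{T}_n \uparrow \tau_\varphi$ almost surely; the stopping rule $h_n \le \epsilon$ terminates the loop at some finite $\mathcal{N}_\epsilon$ with $\tau_\varphi^\epsilon = \mathcal{T}_{\mathcal{N}_\epsilon}$.

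For part (1), the upper bound $F(t) \le F_\epsilon(t)$ is immediate from $\tau_\varphi^\epsilon = \mathcal{T}_{\mathcal{N}_\epsilon} \le \tau_\varphi$ (in the coupling). For the lower bound I would argue as follows: at the stopping step, $B_{\tau_\varphi^\epsilon} = \varphi(T_0^{(\mathcal{N}_\epsilon)})$ and the next strip height is $\le \epsilon$, so the Brownian motion is within $\epsilon$ below the boundary; the remaining time $\tau_\varphi - \tau_\varphi^\epsilon$ is stochastically dominated by the first hitting time $\sigma_\epsilon$ of level $\epsilon$ by a fresh Brownian motion — here is where hypothesis \eqref{hypo++} enters, to guarantee that the boundary $\varphi$ does not "run away" faster than the flat level $\varphi(T_0^{(\mathcal{N}_\epsilon)}) + \epsilon$ over the relevant time horizon, so that hitting the shifted flat level forces hitting $\varphi$. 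Then for $t \ge \epsilon$,
\[
F(t) \ge \mathbb{P}(\tau_\varphi^\epsilon \le t-\epsilon,\ \sigma_\epsilon \le \epsilon) \ge F_\epsilon(t-\epsilon) - \mathbb{P}(\sigma_\epsilon > \epsilon),
\]
using independence of $\sigma_\epsilon$ from the past. It remains to bound $\mathbb{P}(\sigma_\epsilon > \epsilon) \le \frac{3\sqrt{\epsilon}}{\sqrt{2\pi}}$; since $\mathbb{P}(\sigma_\epsilon > \epsilon) = \mathbb{P}(|B_\epsilon| < \epsilon) = \mathbb{P}(|G| < \sqrt{\epsilon})$ this is $\int_{-\sqrt\epsilon}^{\sqrt\epsilon} \frac{1}{\sqrt{2\pi}} e^{-x^2/2}\,dx \le \frac{2\sqrt{\epsilon}}{\sqrt{2\pi}} \le \frac{3\sqrt{\epsilon}}{\sqrt{2\pi}}$, with room to spare (the extra slack presumably absorbs the care needed in making the domination argument with the curved, rather than flat, boundary rigorous).

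For part (2), I would estimate $\mathbb{E}[\mathcal{N}_\epsilon]$ via the identity $\mathbb{E}[\mathcal{N}_\epsilon] = \sum_{n \ge 0} \mathbb{P}(\mathcal{N}_\epsilon > n)$, and bound $\mathbb{P}(\mathcal{N}_\epsilon > n)$ by controlling how fast the strip heights $h_n$ shrink. The key relation is $h_{n+1} = \varphi(T_1^{(n+1)}) - \varphi(T_1^{(n)})$ with $T_1^{(n+1)} - T_1^{(n)} = h_n^2/G_n^2$; since $\varphi$ is $\mathcal{C}^1$ and, by \eqref{hypo++}, $\varphi'(t) \le \frac{1}{2\sqrt{1+t}}$, one gets $h_{n+1} \le \varphi'(\xi)\,(h_n^2/G_n^2) \le \frac{h_n^2}{2 G_n^2 \sqrt{1 + T_1^{(n)}}}$. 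The event $\{\mathcal{N}_\epsilon > n\}$ requires all of $h_1, \dots, h_n > \epsilon$; combining the recursion with a lower bound on $T_1^{(n)}$ (which itself grows because the heights stay bounded below by $\epsilon$, forcing increments of order at least $\epsilon^2/G_n^2$ until termination — though more usefully, $T_1^{(n)} \ge \mathcal{T}_n$ and one can use that $\varphi$ bounded below forces $\tau_\varphi$, hence the $\mathcal{T}_n$, not to be too small) yields a super-geometric (doubly-exponential-in-$n$ type, tempered by the $\sqrt{1+t}$ factor) decay of the typical height, giving $\mathbb{P}(\mathcal{N}_\epsilon > n)$ summably small once $n \gtrsim \sqrt{|\log\epsilon|}$. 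The main obstacle is precisely this last step: controlling the random products $\prod_k \frac{1}{2G_k^2\sqrt{1+T_1^{(k)}}}$ uniformly enough, since the Gaussians $G_k$ can be small and the $T_1^{(k)}$ are themselves random; one wants a deterministic-looking envelope. I expect the argument to split into (a) a deterministic bound on the heights assuming the $G_k^2$ are not too small and $T_1^{(k)}$ not too large, and (b) a crude tail bound showing the bad events (some $G_k^2 < \delta$, or $T_1^{(k)}$ small) cost only a geometrically small probability, so that the dominant contribution to $\mathbb{E}[\mathcal{N}_\epsilon]$ is the deterministic $O(\sqrt{|\log\epsilon|})$ term. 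Getting the exponent of the logarithm to be exactly $1/2$ (rather than $1$) is the delicate point and hinges on the quadratic nature of the recursion $h_{n+1} \lesssim h_n^2$: iterating $k$ times a quadratic contraction starting from $h_0 = \varphi(0)$ of order one drives $h_k$ down to roughly $h_0^{2^k}$, so $h_k \le \epsilon$ already for $k \asymp \log\log(1/\epsilon)$ — hence one should in fact get an even better bound, and the $\sqrt{|\log\epsilon|}$ in the statement must come from the competing effect that the contraction factor $\tfrac{1}{2\sqrt{1+t}}$ degrades (approaches, from below, a constant, or the relevant times $T_1^{(k)}$ may stay bounded) when the boundary is, say, asymptotically $\tfrac12\sqrt{1+t}$; I would track this trade-off carefully to land exactly on the $\sqrt{|\log\epsilon|}$ rate.
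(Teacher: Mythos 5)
Your setup for both parts is sound and matches the paper's Proposition~\ref{prop:temps}: the algorithm is coupled to the sequence of passage times $\mathcal{T}_n$, $\tau_\varphi^\epsilon\stackrel{d}{=}\mathcal{T}_{\mathcal{M}_\epsilon}$, and the upper bound $F\le F_\epsilon$ is immediate.

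For part (1), the ``stochastic domination by $\sigma_\epsilon$'' claim is not quite right as stated, and the fudge you wave at (``the extra slack presumably absorbs the care needed\dots'') is exactly where the constant $3$ comes from. At the stopping step $B$ sits at most $\epsilon$ \emph{below} $\varphi(\tau_\varphi^\epsilon)$, but over the next $\epsilon$ units of time the boundary continues to rise: by \eqref{hypo++}, $\sup_{0\le u\le\epsilon}\bigl(\varphi(\tau_\varphi^\epsilon+u)-\varphi(\tau_\varphi^\epsilon)\bigr)\le\epsilon/2$. So the event you must control is that a fresh Brownian motion does not climb by $\epsilon+\epsilon/2=3\epsilon/2$ in time $\epsilon$, giving $\mathbb{P}(|\tilde B_\epsilon|<3\epsilon/2)\le\frac{3\sqrt{\epsilon}}{\sqrt{2\pi}}$, not your $\mathbb{P}(|\tilde B_\epsilon|<\epsilon)\le\frac{2\sqrt{\epsilon}}{\sqrt{2\pi}}$. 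This is a small fix, but as written the domination is unjustified.

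For part (2) the gap is more substantial: you have misidentified the mechanism that produces the $\sqrt{|\log\epsilon|}$ rate. You reason from $h_{n+1}\lesssim h_n^2$ (a genuine quadratic contraction) that one should get $\log\log(1/\epsilon)$ and then speculate about how the rate ``degrades.'' The paper's recursion is not of this form. Starting from $s_n=h_{n-1}^2/G_{n-1}^2$ and $h_{n-1}\le\frac{s_{n-1}}{2\sqrt{1+\mathcal T_{n-2}}}\le\frac{s_{n-1}}{2\sqrt{1+s_{n-2}}}$, one gets the \emph{second-order} recursion
\[
s_n\;\le\;\frac{s_{n-1}^2}{4\,s_{n-2}\,G_{n-1}^2},
\]
and the $s_{n-2}$ in the denominator is essential. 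Taking logarithms, $\log s_n-2\log s_{n-1}+\log s_{n-2}\le -\log(4G_{n-1}^2)$, i.e.\ the \emph{second difference} of $\log s_n$ is controlled. Telescoping twice turns this into a double sum $Z_{n-1}=\sum_{k}\sum_{j\le k}\log(4G_j^2)$ of i.i.d.\ terms with positive mean $m$, so $\mathbb{E}[Z_{n-1}]\sim mn^2/2$, while the event $\{s_n>2\epsilon\}$ requires $Z_{n-1}\lesssim |\log\epsilon|+O(n)$; the crossover occurs at $n\asymp\sqrt{|\log\epsilon|}$. The tail $\mathbb{P}(\mathcal N_\epsilon>n)$ is then killed by a fourth-moment (Markov/Chebyshev) bound on $Z_{n-1}$, which is $O(n^6)$ against a threshold gap $d_n\gtrsim n^2$, giving a summable $O(1/n^2)$ decay past $\aleph(\kappa,\epsilon)\asymp\sqrt{|\log\epsilon|}$. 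Your proposed route --- a deterministic envelope for the heights plus a crude geometric tail for small $G_k$'s and small $T^{(k)}$'s --- does not control the random product $\prod_k(2G_k^2\sqrt{1+T^{(k)}})^{-1}$ in a way that visibly yields $\sqrt{|\log\epsilon|}$; the missing idea is the reduction to the linear double-sum $Z_{n-1}$ via the second-order log-recursion.
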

The parameter $\epsilon$ describes the precision of the approximation. The number of steps in the Algorithm \ref{algo1} is very small (even smaller than usual results obtained for algorithms based on random walks on spheres, which are close to Algorithm \ref{algo1}, see \cite{muller})  : in fact the constant appearing in \eqref{eq:thm:conv1} can be explicitly computed: for any constant $0<\kappa<1/2$, there exists $\epsilon_0(\kappa)>0$ such that \eqref{eq:thm:conv1} is satisfied as soon as
 $\epsilon<\epsilon_0$, with the particular constant 
 \[
 C=\frac{1}{m\kappa},\quad m=\log(4)+\frac{2\sqrt{2}}{\sqrt{\pi}}\, \mu
\]
and
\begin{equation}
\label{eq:def:mu}
\mu=\int_0^\infty(\log|x|)\,e^{-x^2}\,dx.
\end{equation}
The proof of Theorem \ref{thm1} is based on a main argument developed in the following proposition: each step of Algorithm \ref{algo1} has to be related to a particular part of the Brownian paths before hitting the boundary. 
\begin{prop}
\label{prop:temps} Let $(B_t,\, t\ge 0)$ be a standard one-dimensional Brownian motion. We define the following sequence of stopping times: $s_0=\mathcal{T}_0=0$ and for any $n\ge 1$:
\begin{equation}\label{eq:prop:temps}
s_n:=\inf\Big\{t\ge 0:\ B_{t+\mathcal{T}_{n-1}}=\varphi(\mathcal{T}_{n-1})\Big\}\quad \mbox{and} \quad \mathcal{T}_n:=s_1+\ldots+s_n,
\end{equation}
where the function $\varphi$ satisfies \eqref{hypo}, \eqref{hypo+} and \eqref{hypo++}.
Then the following properties hold:
\begin{enumerate}
\item $(\mathcal{T}_n)_{n\ge 0}$ is a non-decreasing sequence which almost surely converges towards $\tau_\varphi$.
\item Let $n\ge 1$, then the probability distribution of $s_{n+1}$ given the $\sigma$-algebra $\mathcal{F}_n:=\sigma\{  s_1\ldots,s_n\}$ is identical as $( \varphi(\mathcal{T}_{n})-\varphi(\mathcal{T}_{n-1}) )^2/G_n^2$ where $(G_n)_{n\ge 0}$ is a sequence of independent standard Gaussian random variables. Moreover $s_1\stackrel{(d)}{=}(\varphi(0)/G_0)^2$.
\item Let $\mathcal{M}_\epsilon:=\inf\{n\ge 1:\ \varphi(\mathcal{T}_n)-B_{\mathcal{T}_n}\le \epsilon\}$, then $\mathcal{T}_{\mathcal{M}_\epsilon}$ and $\tau_\varphi^\epsilon$, defined in Algorithm \ref{algo1}, are identically distributed, so are $\mathcal{M}_\epsilon$ and $\mathcal{N}_\epsilon$.
\end{enumerate}
\end{prop}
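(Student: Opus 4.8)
The plan is to reduce everything to the strong Markov property of $(B_t)$ together with the classical identity $\inf\{t\ge0:\ B_t=a\}\stackrel{(d)}{=}a^2/G^2$, valid for $a>0$ and $G\sim\mathcal N(0,1)$ (and trivially for $a=0$), which itself follows from the reflection principle: $\PP(\inf\{t:B_t=a\}\le s)=\PP(\sup_{u\le s}B_u\ge a)=2\PP(B_s\ge a)=\PP(G^2\ge a^2/s)$. First I would record the structural facts on which the three assertions rest: by induction on $n$ each $\mathcal T_n$ is an $(\mathcal F^B_t)$--stopping time (it is the first entrance of the continuous path $B$ into the $\mathcal F^B_{\mathcal T_{n-1}}$--measurable level $\varphi(\mathcal T_{n-1})$ after the stopping time $\mathcal T_{n-1}$), one has $B_{\mathcal T_n}=\varphi(\mathcal T_{n-1})$ for every $n\ge1$, and $\mathcal F_n=\sigma(s_1,\dots,s_n)=\sigma(\mathcal T_1,\dots,\mathcal T_n)\subset\mathcal F^B_{\mathcal T_n}$. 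For assertion~(2) I then apply the strong Markov property at $\mathcal T_n$: the process $\widetilde B_t:=B_{\mathcal T_n+t}-B_{\mathcal T_n}$ is a Brownian motion independent of $\mathcal F^B_{\mathcal T_n}$, and since $B_{\mathcal T_n}=\varphi(\mathcal T_{n-1})$,
\[
s_{n+1}=\inf\{t\ge0:\ \widetilde B_t=\varphi(\mathcal T_n)-\varphi(\mathcal T_{n-1})\},
\]
the first--passage time of $\widetilde B$ to the non-negative (by \eqref{hypo+}) and $\mathcal F_n$--measurable level $a_n:=\varphi(\mathcal T_n)-\varphi(\mathcal T_{n-1})$; the conditional law of $s_{n+1}$ given $\mathcal F^B_{\mathcal T_n}$, hence given $\mathcal F_n$, is thus that of $a_n^2/G^2$, which is precisely the claimed identity (the initialisation $s_1\stackrel{(d)}{=}(\varphi(0)/G_0)^2$ is the same computation at $\mathcal T_0=0$). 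A routine enlargement of the probability space — attaching independent signs to the half-normal variables $a_n/\sqrt{s_{n+1}}$ — produces the i.i.d. Gaussian sequence $(G_n)$ realising $s_1=(\varphi(0)/G_0)^2$ and $s_{n+1}=a_n^2/G_n^2$ for every $n$.

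For assertion~(1), monotonicity of $(\mathcal T_n)$ is immediate since $s_n\ge0$. I would prove $\mathcal T_n\le\tau_\varphi$ for all $n$ by induction (the base case $n=1$ being $0=B_0<\varphi(0)\le\varphi(\tau_\varphi)=B_{\tau_\varphi}$): assuming $\mathcal T_{n-1}\le\tau_\varphi$, one has $B_{\mathcal T_{n-1}}=\varphi(\mathcal T_{n-2})\le\varphi(\mathcal T_{n-1})$ and $B_{\tau_\varphi}=\varphi(\tau_\varphi)\ge\varphi(\mathcal T_{n-1})$, so by the intermediate value theorem the continuous path $B$ attains the value $\varphi(\mathcal T_{n-1})$ somewhere on $[\mathcal T_{n-1},\tau_\varphi]$, whence $s_n\le\tau_\varphi-\mathcal T_{n-1}$, i.e. $\mathcal T_n\le\tau_\varphi$. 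Since $\tau_\varphi<\infty$ almost surely under \eqref{hypo} by the law of the iterated logarithm, the non-decreasing bounded sequence $(\mathcal T_n)$ converges a.s. to some $\mathcal T_\infty\le\tau_\varphi$. Letting $n\to\infty$ in $B_{\mathcal T_n}=\varphi(\mathcal T_{n-1})$ and using the continuity of $B$ and of $\varphi$ yields $B_{\mathcal T_\infty}=\varphi(\mathcal T_\infty)$; but on $[0,\tau_\varphi)$ the continuous function $t\mapsto B_t-\varphi(t)$ never vanishes and is negative at $0$, hence strictly negative there, so $\mathcal T_\infty<\tau_\varphi$ is impossible and $\mathcal T_\infty=\tau_\varphi$ a.s.

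For assertion~(3), observe first that $\varphi(\mathcal T_n)-B_{\mathcal T_n}=\varphi(\mathcal T_n)-\varphi(\mathcal T_{n-1})=a_n$, so $\mathcal M_\epsilon=\inf\{n\ge1:\ a_n\le\epsilon\}$. Running Algorithm~\ref{algo1} on the sequence $(G_n)$ built above, the pair $(T_0,T_1)$ after $n$ iterations equals $(\mathcal T_{n-1}^{G},\mathcal T_n^{G})$, where $\mathcal T_0^{G}=0$, $\mathcal T_1^{G}=(\varphi(0)/G_0)^2$ and $\mathcal T_{n+1}^{G}=\mathcal T_n^{G}+(\varphi(\mathcal T_n^{G})-\varphi(\mathcal T_{n-1}^{G}))^2/G_n^2$; by assertion~(2) the consecutive pairs $(\mathcal T_{n-1}^{G},\mathcal T_n^{G})$ form a Markov chain with the same initial law and the same transition kernel as $(\mathcal T_{n-1},\mathcal T_n)$, so $(\mathcal T_n^{G})_{n\ge0}\stackrel{(d)}{=}(\mathcal T_n)_{n\ge0}$ as sequences. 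The while-condition being $\varphi(\mathcal T_n^{G})-\varphi(\mathcal T_{n-1}^{G})>\epsilon$, the algorithm stops at step $\inf\{n\ge1:\ \varphi(\mathcal T_n^{G})-\varphi(\mathcal T_{n-1}^{G})\le\epsilon\}$ with output the corresponding $\mathcal T^{G}$, so $\mathcal N_\epsilon$ and $\tau_\varphi^\epsilon$ are obtained from $(\mathcal T_n^{G})$ by the very same measurable stopping-and-output functional that produces $\mathcal M_\epsilon$ and $\mathcal T_{\mathcal M_\epsilon}$ from $(\mathcal T_n)$, whence $(\mathcal N_\epsilon,\tau_\varphi^\epsilon)\stackrel{(d)}{=}(\mathcal M_\epsilon,\mathcal T_{\mathcal M_\epsilon})$. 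The only genuinely delicate point is the passage, in assertions~(2)--(3), from the pathwise strong-Markov description to a statement phrased through a single i.i.d. Gaussian sequence driving the algorithm; the remaining ingredients are the intermediate value theorem, path continuity, and the reflection principle.
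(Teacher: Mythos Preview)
Your proof is correct and follows essentially the same route as the paper: strong Markov property plus reflection principle for assertion~(2), the identity $B_{\mathcal T_n}=\varphi(\mathcal T_{n-1})$ and path continuity for assertion~(1), and the rewriting $\mathcal M_\epsilon=\inf\{n\ge1:\varphi(\mathcal T_n)-\varphi(\mathcal T_{n-1})\le\epsilon\}$ for assertion~(3). You are simply more explicit on two points the paper leaves implicit---the induction showing $\mathcal T_n\le\tau_\varphi$ via the intermediate value theorem, and the construction of the i.i.d.\ Gaussian sequence by attaching independent signs---but the underlying argument is the same.
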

Let us note that the mean of each random variable $s_n$ defined by \eqref{eq:prop:temps} is infinite since $\mathbb{E}[G^{-2}]=+\infty$ where $G$ is a standard Gaussian variable. Proposition \ref{prop:temps} suggests that the first-passage time can be obtained as a sum of positive random variables of infinite average, we easily deduce $\mathbb{E}[\tau_\varphi]=+\infty$. In the particular case of increasing boundaries $\varphi$, the sum has infinitely many terms.
\begin{proof}[Proof of Proposition \ref{prop:temps}] ~\\\textbf{Step 1.}
By construction, the sequence $(\mathcal{T}_n)_{n\ge 0}$ is non-decreasing and non-negative: it converges almost surely to $\mathcal{T}_\infty$. Since $\varphi$ is a non-decreasing boundary, $\mathcal{T}_n\le \tau_\varphi$ for any $n\ge 0$. In particular $\mathcal{T}_\infty$ is less than $\tau_\varphi$ which is a finite stopping time due to the law of the iterated logarithm, see \eqref{hypo} followed by discussion. Consequently, the random variable $B_{\mathcal{T}_\infty}$
is well defined. 
Since $\varphi$
 is non-decreasing, we get $B_{\mathcal{T}_n}=\varphi(\mathcal{T}_{n-1})
 $ for any $n\ge 1$.  
 Taking the large $n$ limit leads to $B_{\mathcal{T}_\infty} = \varphi(\mathcal{T}_\infty)$, the Brownian paths and the function \(\varphi\) being continuous. We deduce that $\mathcal{T}_\infty=\tau_\varphi$.  
\\
\textbf{Step 2.} Let us first consider the stopping time $s_1$. Using the reflection principle of the Brownian paths and a scaling property, we obtain:
\begin{align*}
\mathbb{P}(s_1> t)&=\mathbb{P}\Big( \sup_{0\le u\le t}B_u < \varphi(0) \Big)=\mathbb{P}(|B_t|<\varphi(0))\\
&=\mathbb{P}(B_1^2< \varphi(0)^2/t)=\mathbb{P}(\varphi(0)^2/G_0^2> t),\quad t\ge 0.
\end{align*}
The general $n$-th case can be proven using similar arguments combined with the Markov property of the Brownian motion:
\begin{align*}
\mathbb{P}(s_{n+1}> t\vert \mathcal{F}_n)&=\mathbb{P}\Big( \sup_{\mathcal{T}_n\le u\le \mathcal{T}_n+ t}B_u < \varphi(\mathcal{T}_n) \Big| \mathcal{F}_n\Big)\\
&=\mathbb{P}\Big( \sup_{0\le u\le  t}B_{u+\mathcal{T}_n}-B_{\mathcal{T}_n} < \varphi(\mathcal{T}_n)-\varphi(\mathcal{T}_{n-1}) \Big| \mathcal{F}_n\Big)\\
&=\mathbb{P}\Big( \sup_{0\le u\le  t}\tilde{B}_u < \varphi(\mathcal{T}_n)-\varphi(\mathcal{T}_{n-1}) \Big| \mathcal{F}_n\Big),
\end{align*}
where $\tilde{B}$ is a Brownian motion independent of $\mathcal{F}_n$.\\
\textbf{Step 3.} Using the results developed in Step 2, we observe that $(s_n)_{n\wedge \mathcal{M}_\epsilon}$ and the sequence of values $T_1$, defined in Algorithm \ref{algo1}, have the same distribution. It is therefore obvious that $\mathcal{T}_{\mathcal{M}_\epsilon}$ and $\tau_\varphi^\epsilon$ are identically distributed. Indeed the stopping time can be rewritten as follows:
\begin{equation}\label{eq:reecr}
\mathcal{M}_\epsilon=\inf\{ n\ge 1: \varphi(\mathcal{T}_n)-\varphi(\mathcal{T}_{n-1})\le \epsilon \}.
\end{equation}
\end{proof}
\begin{proof}[Proof of Theorem \ref{thm1}] ~\\
\textbf{Step 1.} Let us recall that $\mathcal{T}_n$ is defined by \eqref{eq:prop:temps}. By Proposition \ref{prop:temps}, $\mathcal{T}_n\le \tau_\varphi$ for any $n\ge 0$ and in particular $\mathcal{T}_{\mathcal{M}_\epsilon}\le \tau_\varphi$. Hence
\[
\mathbb{P}(\mathcal{T}_{\mathcal{M}_\epsilon}\le t)\ge \mathbb{P}(\tau_\varphi\le t),\quad \forall t\ge 0.
\]
Since $\tau_\varphi^\epsilon$ has the same distribution as $\mathcal{T}_{\mathcal{M}_\epsilon}$, we obtain
\begin{equation}
\label{eq:repar-facil}
F_\epsilon(t)\ge F(t),\quad\forall t\ge 0,
\end{equation}
where $F_\epsilon$ and $F$ are the associated cumulative distribution functions. Let us now prove the second bound in \eqref{eq:thm:1}. For $t\ge \epsilon$, 
\begin{align}\label{eq:etap1}
F_\epsilon(t-\epsilon)&=\mathbb{P}(\tau_\varphi^\epsilon\le t-\epsilon)=\mathbb{P}(\mathcal{T}_{\mathcal{M}_\epsilon}\le t-\epsilon)\nonumber\\
&\le\mathbb{P}(\mathcal{T}_{\mathcal{M}_\epsilon}\le t-\epsilon,\tau_\varphi>t)+\mathbb{P}(\tau_\varphi\le t)\nonumber\\
&\le \mathbb{P}(|\mathcal{T}_{\mathcal{M}_\epsilon}-\tau_\varphi|>\epsilon)+F(t).
\end{align}
Combining the Markov property of the Brownian motion and the reflection principle leads to
\begin{align*}
P_\epsilon &:=\mathbb{P}(|\mathcal{T}_{\mathcal{M}_\epsilon}-\tau_\varphi|>\epsilon)\le 1-\mathbb{P}\Big(\sup_{0\le u \le \epsilon} B_{\mathcal{T}_{\mathcal{M}_\epsilon}+u}\ge 
\sup_{0\le u \le \epsilon} \varphi(\mathcal{T}_{\mathcal{M}_\epsilon}+u)\Big)\\ 
&\le 1-\mathbb{P}\Big( \sup_{0\le u \le \epsilon} B_{\mathcal{T}_{\mathcal{M}_\epsilon}+u} 
- B_{\mathcal{T}_{\mathcal{M}_\epsilon}}\ge \sup_{0\le u \le \epsilon} \varphi(\mathcal{T}_{\mathcal{M}_\epsilon}+u)- \varphi(\mathcal{T}_{\mathcal{M}_\epsilon}) + \epsilon\Big) \\
&\le 1-\mathbb{P}\Big( \sup_{0\le u \le \epsilon} B_{\mathcal{T}_{\mathcal{M}_\epsilon}+u} 
- B_{\mathcal{T}_{\mathcal{M}_\epsilon}}\ge  \varphi(\mathcal{T}_{\mathcal{M}_\epsilon}+\epsilon)- \varphi(\mathcal{T}_{\mathcal{M}_\epsilon}) + \epsilon\Big) \\
&\le 1-\mathbb{P}\Big(  \sup_{0\le u \le \epsilon}\tilde{B}_u\ge \sup_{\mathcal{T}_{\mathcal{M}_\epsilon}\le \theta \le \mathcal{T}_{\mathcal{M}_\epsilon}+\epsilon} \varphi'(\theta)\epsilon+\epsilon\Big)\\
&\le 1-\mathbb{P}\Big( |\tilde{B}_\epsilon|\ge \sup_{\mathcal{T}_{\mathcal{M}_\epsilon}\le \theta \le \mathcal{T}_{\mathcal{M}_\epsilon}+\epsilon} \varphi'(\theta)\epsilon+\epsilon\Big).
\end{align*}
Using Hypothesis \eqref{hypo++} and straightforward computations permits us to obtain
\begin{equation}\label{eq:etap2}
\mathbb{P}(|\mathcal{T}_{\mathcal{M}_\epsilon}-\tau_\varphi|>\epsilon)\le 1-\mathbb{P}(|\tilde{B}_\epsilon|\ge 3\epsilon/2)\le 3\sqrt{\frac{\epsilon}{2\pi}}.
\end{equation}
The lower bound in \eqref{eq:thm:1} holds due to both \eqref{eq:etap1} and \eqref{eq:etap2}. \\
\textbf{Step 2.} Let us now focus our attention to the efficiency of this algorithm. 
We need to estimate the number of steps which depends on the small parameter $\epsilon$. Using the third result presented in Proposition \ref{prop:temps} on one hand and \eqref{eq:reecr} on the other hand, we obtain
\[
\mathbb{P}(\mathcal{N}_\epsilon> n)=\mathbb{P}(\mathcal{M}_\epsilon> n)=\mathbb{P}(\varphi(\mathcal{T}_1)-\varphi(\mathcal{T}_0)>\epsilon,\ldots,\varphi(\mathcal{T}_n)-\varphi(\mathcal{T}_{n-1})>\epsilon ).
\]
Hypothesis \eqref{hypo++} implies
\begin{equation}\label{eq:lien}
\mathbb{P}(\mathcal{N}_\epsilon> n)\le \mathbb{P}(s_1>2\epsilon,\ldots, s_n>2\epsilon).
\end{equation}
\textbf{Step 2.1.} Let us first estimate the previous upper-bound. We introduce a sequence of independent  standard Gaussian random variables $(G_n)_{n\ge 0}$ and define
\begin{equation}
\label{eq:def:Z}
X_n=\log(4G_n^2),\quad \Xi_n=\sum_{k=0}^n X_k\quad\mbox{and}\quad Z_n=\sum_{k=0}^n \Xi_k.
\end{equation}
Let us define $\Pi(n,\epsilon):=\mathbb{P}(s_n>2\epsilon)$. 
By Proposition \ref{prop:temps}, we know that the random variables $s_{n+1}$ 
are related to $G_n$ and therefore
\begin{align*}
 \Pi(1,\epsilon)&=\mathbb{P}(2\epsilon G_0^2<\varphi(0)^2)=\mathbb{P}\Big(\log(4G_0^2)<-\log(\epsilon)+\log(2)+2\log\varphi(0)\Big)\\
 &=\mathbb{P}\Big( Z_0< -\log(\epsilon)+\log(2)+2\log\varphi(0)\Big).
 \end{align*}
 Let us prove that, for $n\ge 1$, we have the general formula:
 \begin{equation}
 \label{eq:genform}
 \Pi(n,\epsilon)\le \mathbb{P}\Big(Z_{n-1}<-\log(\epsilon)+(2n-1)\log(2)+(2n)\log \varphi(0)\Big).
 \end{equation}
By Proposition \ref{prop:temps}, we have for $n\ge 2$,
 \begin{align}\label{eq:1}
 \Pi(n,\epsilon)=\mathbb{P}\Big( ( \varphi(\mathcal{T}_{n-1})-\varphi(\mathcal{T}_{n-2}) )^2>2\epsilon G_{n-1}^2 \Big).
\end{align}
Since $\varphi$ is a non decreasing function satisfying Hypothesis \eqref{hypo++}, the following upper-bound holds for $n\ge 2$:
\begin{equation}\label{eq:2}
\varphi(\mathcal{T}_{n-1})-\varphi(\mathcal{T}_{n-2}) \le \frac{\mathcal{T}_{n-1}-\mathcal{T}_{n-2} }{2\sqrt{1+\mathcal{T}_{n-2}}}\le\frac{s_{n-1}}{2\sqrt{1+s_{n-2}}}.
\end{equation}
Hence for $n=2$, \eqref{eq:1} and \eqref{eq:2} imply
\begin{align*}
\Pi(2,\epsilon)&\le \mathbb{P}\Big(\frac{s_{1}^2}{2^2}>2\epsilon G_1^2\Big)=\mathbb{P}\Big(\epsilon (2 G_1^2)(2G_0^2)^2<\varphi(0)^4\Big)\\
&=\mathbb{P}(2X_0+X_1<-\log(\epsilon)+3\log(2)+4\log\varphi(0))\\
&=\mathbb{P}(Z_1<-\log(\epsilon)+3\log(2)+4\log\varphi(0)).
\end{align*}
Using the lower-bound $1+s_{n-1}\ge s_{n-1}$ and similar arguments as those developed previously, the general case is expressed as follows:
\begin{align*}
\Pi(n,\epsilon)&\le \mathbb{P}\Big( \frac{s_{n-1}^2}{2^2(1+s_{n-2})}>2\epsilon G_{n-1}^2 \Big)\le \mathbb{P}\Big( \frac{s_{n-2}^3}{2^22^4s_{n-3}^2}>2\epsilon G_{n-1}^2G_{n-2}^4 \Big)\\
&\le \mathbb{P}\Big(\frac{s_{2}^{n-1}}{s_1^{n-2}}>\epsilon 2 2^2 2^4\ldots 2^{2(n-2)}G_{n-1}^2G_{n-2}^4\ldots G_{2}^{2(n-2)}\Big)\\
&\le\mathbb{P}\Big(\Big(\frac{s_{1}^2}{2^2G_1^2}\Big)^{n-1}\frac{1}{s_1^{n-2}}>\epsilon 2 2^2 2^4\ldots 2^{2(n-2)}G_{n-1}^2G_{n-2}^4\ldots G_{2}^{2(n-2)}\Big)\\
&\le\mathbb{P}\Big(\varphi(0)^{2n}>\epsilon 2 2^2 2^4\ldots 2^{2(n-1)}G_{n-1}^2G_{n-2}^4\ldots G_{0}^{2n}\Big)\\
&\le \mathbb{P}\Big(Z_{n-1}<-\log(\epsilon)+(2n-1)\log(2)+(2n)\log\varphi(0)\Big). 
\end{align*}
\textbf{Step 2.2.} By \eqref{eq:lien} and the arguments developed in Step 2.1, we obtain
\[
\mathbb{P}(\mathcal{N}_\epsilon>n)\le \mathbb{P}(s_n>2\epsilon)\le \mathbb{P}(Z_{n-1} - \mathbb{E}Z_{n-1}<\eta(\epsilon,n)- \mathbb{E}Z_{n-1}),
\]
where \[
\eta(\epsilon,n):=-\log(\epsilon)+(2n-1)\log(2)+(2n)\log\varphi(0).
\]
Let us observe that, for any $n\ge 0$,
$m:=\mathbb{E}[X_n]=\log(4)+\frac{2\sqrt{2}}{\sqrt{\pi}}\mu>0$ where $\mu$ is defined by \eqref{eq:def:mu}. Hence 
\[
\mathbb{E}[Z_n]=\sum_{k=0}^n\mathbb{E}[\Xi_n]=\sum_{k=0}^n\sum_{j=0}^k\mathbb{E}[X_j]=m \sum_{k=0}^n(k+1)=\frac{m(n+1)(n+2)}{2}.
\]
Thus, for \(n\) large enough, \(\eta(\epsilon,n)- \mathbb{E}Z_{n-1} < 0\).
Introducing $d_n:=|mn(n+1)/2-\eta(\epsilon,n)|$, we observe that, for any $0<\kappa<1/2$ there exists $\aleph(\kappa,\epsilon)\in\mathbb{N}$ such that $d_n>mn^2(1/2-\kappa)$ for $n$  sufficiently large that is $n\ge \aleph(\kappa,\epsilon)$. After straightforward computations, we can choose 
\begin{equation}\label{eq:eta}
\aleph(\kappa, \epsilon):=\Big\lfloor\sqrt{\frac{|\log(2\epsilon)|}{m\kappa}}+\Big| \frac{1}{2\kappa}-\frac{\log(2\varphi(0))}{m\kappa} \Big|\Big\rfloor+1.
\end{equation}
Markov's inequality leads to
\begin{equation}\label{eq:mark}
\mathbb{P}(\mathcal{N}_\epsilon>n)\le \mathbb{P}(|Z_{n-1}-\mathbb{E}[Z_{n-1}]|>d_n)\le \frac{\mathbb{E}[(Z_{n-1}-\mathbb{E}[Z_{n-1}])^4]}{d_n^4}.
\end{equation}
Let us note that $\overline{X}_j:=X_j-m$ are i.i.d. random variables with finite moments of any order. We denote $m_k:=\mathbb{E}[\overline{X}_j^k]$. Therefore we obtain
\begin{align}
\label{eq:mom}
\mathcal{Z}_{n-1}&:=\mathbb{E}[(Z_{n-1}-\mathbb{E}[Z_{n-1}])^4]=\mathbb{E}\Big[ \Big(\sum_{k=0}^{n-1}\sum_{j=0}^k \overline{X}_j\Big)^4 \Big]=\mathbb{E}\Big[ \Big(\sum_{j=0}^{n-1}(n-j) \overline{X}_j\Big)^4 \Big]\nonumber\\
&=\sum_{j=0}^{n-1}(n-j)^4m_4+2\sum_{0\le j<k\le n-1}(n-j)^2(n-k)^2m_2^2\nonumber\\
&\le \frac{m_4}{30}\, n(n+1)(6n^3+9n^2+n-1)+\frac{m_2^2}{36}\, n^2(n+1)^2(2n+1)^2.
\end{align}
Hence, there exist a constant $C_0>0$ such that $\mathbb{E}[(Z_{n-1}-\mathbb{E}[Z_{n-1}])^4]\le C_0 n^6$. Combining the previous inequality with \eqref{eq:eta} and \eqref{eq:mark} leads to
\[
\mathbb{P}(\mathcal{N}_\epsilon>n)\le \frac{C_0}{m^4(1/2-\kappa)^4}\,\frac{1}{n^2},\quad \mbox{for}\quad n\ge \aleph(\kappa,\epsilon).
\]
Consequently, the following upper-bound holds
\[
\mathbb{E}[\mathcal{N}_\epsilon]=\sum_{n\ge 0}\mathbb{P}(\mathcal{N}_\epsilon>n)\le \aleph(\kappa,\epsilon)+\frac{C_0}{m^4(1/2-\kappa)^4}\sum_{n\ge \aleph(\kappa,\epsilon)}\frac{1}{n^2}.
\]
In order to conclude, it suffices to note that $\aleph(\kappa,\epsilon)\to \infty$ as $\epsilon\to 0$, the second term in the previous inequality therefore becomes small as $\epsilon\to 0$: the leading term is finally $\aleph(\kappa,\epsilon)$ which is equivalent to $\sqrt{|\log(2\epsilon)|/(m\kappa)}$ by \eqref{eq:eta}.
\end{proof}

\section{First-passage time to boundaries with bounded derivative}
\label{sec:gen}
The algorithm presented in Section \ref{sec:incre} is simple to achieve (it only requires independent Gaussian random variables) 
and efficient: the averaged number of steps is of the order $\sqrt{|\log\epsilon|}$ where $\epsilon$ stands for the small parameter 
appearing in the rejection sampling (see Theorem \ref{thm1}). In order to apply Algorithm \ref{algo1} the curved boundary, the 
Brownian motion is going to hit, has to satisfies suitable conditions: \eqref{hypo}, \eqref{hypo+} and \eqref{hypo++}. 
Asking for the monotonicity of the function $\varphi$ is quite restrictive, that's why we present an extension of the algorithm 
which is of course less efficient (even if the average number of steps is still very small) but which permits us to deal with more general boundaries. 
Let us introduce the following assumption: there exist two constants $\rho_+>0$ and $\rho_->0$ such that
\begin{align}
\label{hypo-new}
\varphi:\mathbb{R}_+\to \mathbb{R}\ &\mbox{is a}\ \mathcal{C}^1\mbox{-continuous function satisfying}\nonumber\\ &\sup_{t\ge 0}\varphi'(t)\le \rho_+\quad\mbox{and}\quad \inf_{t\ge 0}\varphi'(t)\ge -\rho_-.
\tag{H4}
\end{align}
For such boundaries, we present an algorithm which permits us,
for any \(K\in\mathbb{R}^+\), to approximate the hitting time 
$\tau_\varphi^K = \tau_\varphi\wedge K$, where \(\tau_\varphi\) is defined 
in \eqref{def:tau}. 
Let us introduce some notations: the \emph{inverse Gaussian distribution} of parameters $\mu>0$ and $\lambda>0$ will be denoted by $I(\mu,\lambda)$ and \ is defined by its
the probability distribution function:
\[
f(x)=\sqrt{\frac{\lambda}{2\pi x^3}}\,\exp-\Big\{\frac{\lambda(x-\mu)^2}{2\mu^2x}\Big\}\ \mathbbm{1}_{\{x\ge 0\}}.
\] 
\begin{Algo}
\label{algo2}
Let $\epsilon>0$ be a small parameter and $r>\rho_-$ where $\rho_-$ is defined in \eqref{hypo-new}.\\
Initialization: $(T,H)=(0,\varphi(0))$ and $\mathcal{N}_{\epsilon,K}=0$.\\
While $H>\epsilon$ and \(T < K\), simulate $\hat{G}$ an inverse Gaussian random variable with distribution $I(H/r,H^2)$ and do:
\begin{align}\label{eq:algo2}
\left\{\begin{array}{l}
H\leftarrow\varphi(T+\hat{G})-\varphi(T)+
r\,
\hat{G},\\
T\leftarrow \hat{G}+T,\\
\mathcal{N}_{\epsilon,K}\leftarrow
\mathcal{N}_{\epsilon,K}+1.
\end{array}\right.
\end{align}
Outcome: $\tau^{\epsilon,K}_\varphi\leftarrow T\wedge K$ and $\mathcal{N}_{\epsilon,K}$.
\end{Algo}
Algorithm \ref{algo2} is quite simple, it only requires the simulation of inverse Gaussian distributed random variables. Let us recall the following scaling property: if $\hat{G}\sim I(H/r,H^2)$ then $H\hat{G}/r\sim I(1,r H)$. Moreover $\frac{(r\hat{G}-H)^2}{\hat{G}}$ is Chi-squared distributed with one degree of freedom (the square of a standard Gaussian random variable). In order to simulate an inverse Gaussian random variable, we suggest to use the algorithm introduced by Michael, Schucany and Haas (see \cite{MSH} or \cite[p.~149]{devroye}). Let us now state the efficiency of Algorithm \ref{algo2}. The inverse Gaussian distribution does not permit us to argue in a similar way as in Section \ref{sec:incre}. That's why we are going to use the general potential theory in order to upper-bound the averaged number of steps. This kind of arguments was already introduced in convergence results associated to the Random Walk on Spheres algorithm which permits 
the approximation of the solution of the Dirichlet problem, see for instance \cite{muller}. 
\begin{thm}\label{thm2}\begin{enumerate}
\item Let us assume that the boundary function $\varphi$ satisfies \eqref{hypo-new} then the random variable $\tau_\varphi^{\epsilon,K}$ defined in Algorithm \ref{algo2} converges in distribution towards $\tau_\varphi^K=\tau_\varphi\wedge K$ where $\tau_\varphi$ is defined by \eqref{def:tau} as $\epsilon$ tends to zero. More precisely
\begin{equation}\label{eq:thm:21}
F_{\epsilon,K}(t-\epsilon)-(1+\rho)
\sqrt{\frac{2\epsilon}{\pi}}\le F_K(t)\le F_{\epsilon,K}(t),\quad\mbox{for any}\quad t\ge \epsilon,
\end{equation}
where $F_K$ (resp. $F_{\epsilon,K}$) is the cumulative distribution function of $\tau_\varphi^K$ (resp. $\tau_\varphi^{\epsilon,K}$).
\item There exist positive constants $a$, $b$, $\kappa_0$, $\kappa_1$ and $\epsilon_0$ such that: for any $\rho_+\le \kappa_0$ and any $(K,r)$ satisfying $(r+\kappa_0)K\le \kappa_1$, the random number of iterations $\mathcal{N}_{\epsilon,K}$ defined in Algorithm \ref{algo2} satisfies the following upper bound
\begin{equation}
\label{eq:thm:conv12}
\mathbb{E}[\mathcal{N}_{\eps,K}]\le (a+br)|\log \epsilon|,\quad \forall  \epsilon\le \epsilon_0.
\end{equation}
\item For non increasing functions $\varphi$: there exists two positive constants $a$ and $\epsilon_0$ such that
\begin{equation}
\label{eq:thm:conv13}
\mathbb{E}[\mathcal{N}_{\eps,K}]\le ar^2K|\log \epsilon|,\quad \forall  \epsilon\le \epsilon_0.
\end{equation}
 \end{enumerate}
\end{thm}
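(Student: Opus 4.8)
The plan is to transpose the structure of Proposition~\ref{prop:temps} and Theorem~\ref{thm1} to Algorithm~\ref{algo2}, the new ingredients being the drift $r$ and the inverse Gaussian law. The first step is a probabilistic reinterpretation. Set $\hat B_t:=B_t+rt$ (a Brownian motion with drift $r$) and $\psi(t):=\varphi(t)+rt$; since $r>\rho_-$, hypothesis~\eqref{hypo-new} makes $\psi$ a $\mathcal C^1$, strictly increasing function with $r-\rho_-\le\psi'\le r+\rho_+$. Recall that the first-passage time of $\hat B$ to a level $a>0$ is $I(a/r,a^2)$-distributed; a computation along the lines of Step~2 of Proposition~\ref{prop:temps} then shows that the successive values of $(T,H)$ produced by Algorithm~\ref{algo2} have the same law as $(\mathcal T_n,\psi(\mathcal T_n)-\psi(\mathcal T_{n-1}))$, where $\mathcal T_0=0$, $H^{(0)}:=\varphi(0)$, and, conditionally on $\mathcal F_n:=\sigma(\mathcal T_1,\dots,\mathcal T_n)$, the increment $\mathcal T_{n+1}-\mathcal T_n$ is $I(H^{(n)}/r,(H^{(n)})^2)$-distributed with $H^{(n)}:=\psi(\mathcal T_n)-\psi(\mathcal T_{n-1})$ for $n\ge1$; equivalently, the algorithm's variable $H$ equals the gap $\varphi(\mathcal T_n)-B_{\mathcal T_n}=\psi(\mathcal T_n)-\hat B_{\mathcal T_n}$, which is $\ge(r-\rho_-)(\mathcal T_n-\mathcal T_{n-1})>0$. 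Because each tilted line through $(\mathcal T_{n-1},\varphi(\mathcal T_{n-1}))$ with slope $-r$ stays below $\varphi$ (again $r>\rho_-$), $B$ remains below $\varphi$ on $[0,\mathcal T_n]$, so $\mathcal T_n\le\tau_\varphi$, the sequence $(\mathcal T_n)$ is non-decreasing, and $H^{(n)}\le(r+\rho_+)(\mathcal T_n-\mathcal T_{n-1})\to0$ whenever $\mathcal T_n$ converges; hence the algorithm stops a.s., and with $\mathcal M:=\inf\{n\ge1:\ H^{(n)}\le\epsilon\ \text{or}\ \mathcal T_n\ge K\}$ one has $\tau_\varphi^{\epsilon,K}\stackrel{(d)}{=}\mathcal T_{\mathcal M}\wedge K$ and $\mathcal N_{\epsilon,K}\stackrel{(d)}{=}\mathcal M$.

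For statement~(1), the inequality $F_K(t)\le F_{\epsilon,K}(t)$ follows as in~\eqref{eq:repar-facil}: if $\mathcal M$ is triggered by the gap condition then $\mathcal T_{\mathcal M}<\tau_\varphi$, hence $\mathcal T_{\mathcal M}\wedge K\le\tau_\varphi\wedge K$, while if it is triggered by $\mathcal T_{\mathcal M}\ge K$ then $\tau_\varphi>\mathcal T_{\mathcal M}\ge K$ and both sides equal $K$; in all cases $\tau_\varphi^{\epsilon,K}\le\tau_\varphi^K$. For the lower bound I would copy Step~1 of the proof of Theorem~\ref{thm1}: for $t\ge\epsilon$,
\[
F_{\epsilon,K}(t-\epsilon)\le\mathbb P\big(\tau_\varphi^{\epsilon,K}\le t-\epsilon,\ \tau_\varphi^K>t\big)+F_K(t),
\]
and on the first event $\mathcal M$ must be the gap-triggered index with $\mathcal T_{\mathcal M}\le t-\epsilon<K$ and $\varphi(\mathcal T_{\mathcal M})-B_{\mathcal T_{\mathcal M}}\le\epsilon$, while $\tau_\varphi>t\ge\mathcal T_{\mathcal M}+\epsilon$; conditioning at the stopping time $\mathcal T_{\mathcal M}$, applying the reflection principle to a fresh Brownian motion and using $\sup_{[\mathcal T_{\mathcal M},\mathcal T_{\mathcal M}+\epsilon]}\varphi\le\varphi(\mathcal T_{\mathcal M})+\rho_+\epsilon$ from~\eqref{hypo-new}, this conditional probability is at most $\mathbb P(|B_\epsilon|<(1+\rho_+)\epsilon)\le(1+\rho_+)\sqrt{2\epsilon/\pi}$. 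This gives~\eqref{eq:thm:21} with $\rho=\rho_+$, and the convergence in distribution follows by combining $F_K(t)\le F_{\epsilon,K}(t)\le F_K(t+\epsilon)+(1+\rho_+)\sqrt{2\epsilon/\pi}$ at continuity points of $F_K$.

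For statements~(2)--(3) I would argue, following the potential-theoretic method used for the Random Walk on Spheres~\cite{muller}, by exhibiting a non-negative super-solution. Consider the space--time process $(t,\hat B_t)$ inside $\{0\le t<K,\ x<\psi(t)\}$ and, with $L:=\partial_t+r\partial_x+\tfrac12\partial_{xx}$, the function $v(t,x)=\frac{1}{c}\big(\log(\min\{\psi(t)-x,\ell\}/\epsilon)\big)^+$, where $\ell$ and $c>0$ will be chosen. On $\{\epsilon<\psi(t)-x<\ell\}$ one computes $Lv=\frac{1}{c}\big(\varphi'(t)/(\psi(t)-x)-1/(2(\psi(t)-x)^2)\big)$, which is $\le0$ as soon as $\varphi'(t)\le1/(2(\psi(t)-x))$; for statement~(3) this holds automatically since $\varphi'\le0$, and for statement~(2) it holds when $\rho_+\le\kappa_0:=1/(2\kappa_1)$, because the gap $H^{(n)}$ stays below $(r+\rho_+)K\le\kappa_1$ while the algorithm runs (here $(r+\kappa_0)K\le\kappa_1$ is used, with $\ell=\kappa_1$; for~(3) one takes $\ell$ of order $rK$, since then $H^{(n)}\le rK$). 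The corners of $v$ (at $\ell$ and, after truncation, at $\epsilon$) and the floor at $0$ have the right sign, so $v(t,\hat B_t)$ is a bounded-below supermartingale up to $\tau_\varphi$; evaluated at the embedded times $\mathcal T_n$ this gives $\mathbb E[v(\mathcal T_{n+1},\hat B_{\mathcal T_{n+1}})\mid\mathcal F_n]=v(\mathcal T_n,\hat B_{\mathcal T_n})-\frac{1}{c}\mathbb E[\int_{\mathcal T_n}^{\mathcal T_{n+1}}(-Lv)(s,\hat B_s)\,ds\mid\mathcal F_n]$, and one tunes $c$ so this drift is $\le-1$. The place where the rate enters is the estimate of that one-step mass: by the inverse Gaussian scaling $\mathcal T_{n+1}-\mathcal T_n\stackrel{(d)}{=}(H^{(n)}/r)\,W$ with $W\sim I(1,rH^{(n)})$ conditionally on $\mathcal F_n$, together with $(r-\rho_-)(\mathcal T_{n+1}-\mathcal T_n)\le H^{(n+1)}\le(r+\rho_+)(\mathcal T_{n+1}-\mathcal T_n)$, one gets
\[
\mathbb E[\log H^{(n+1)}\mid\mathcal F_n]\le\log H^{(n)}+\log(1+\rho_+/r)+g(rH^{(n)}),
\]
where $g(\lambda):=\mathbb E[\log W]$ for $W\sim I(1,\lambda)$, the term $\log(1+\rho_+/r)$ being absent when $\varphi$ is non-increasing. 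One then establishes that $g$ is negative and increasing, with $g(\lambda)\to-\infty$ as $\lambda\to0$ and $g(\lambda)\asymp-1/\lambda$ as $\lambda\to\infty$, and uses the uniform bound on $H^{(n)}$ to reach $\mathbb E[\log H^{(n+1)}\mid\mathcal F_n]\le\log H^{(n)}-c$ with $c=c(\rho_\pm,r,K)$ of order $1/(a+br)$ for~(2) and of order $1/(ar^2K)$ for~(3); the degenerate regime in which $r$ is so small that the test $\mathcal T_n\ge K$ stops the algorithm after $O(1)$ steps is treated separately. Optional stopping of the supermartingale $v(\mathcal T_{n\wedge\mathcal M},\hat B_{\mathcal T_{n\wedge\mathcal M}})+(n\wedge\mathcal M)$, together with $v\ge0$, then yields $\mathbb E[\mathcal M]\le v(0,0)=\frac{1}{c}\log(\varphi(0)/\epsilon)$, which is the announced bound for $\epsilon\le\epsilon_0$.

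The main obstacle, in my view, is precisely this last quantitative step: extracting from the law of the inverse Gaussian increments a one-step drift of $\log H^{(n)}$ bounded above by a negative constant of the \emph{correct} order in $r$ and $K$, uniformly over the (bounded but possibly large) range of gaps occurring before stopping. This needs sharp two-sided control of $g(\lambda)=\mathbb E[\log W]$, $W\sim I(1,\lambda)$, and a careful handling of the interplay between the two stopping conditions and of the truncation of $v$ near $\{H=\epsilon\}$ --- the latter being manageable only because the inverse Gaussian has an extremely thin left tail, so that $\mathbb E[(\log H^{(n+1)})^-]$ is negligible once $\epsilon$ is small.
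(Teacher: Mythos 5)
Your argument for part~(1) matches the paper essentially line for line, and your core estimate for parts~(2)--(3), namely
\[
\mathbb{E}[\log H^{(n+1)}\mid\mathcal F_n]\le \log H^{(n)}+\log\bigl(1+\rho_+/r\bigr)+g\bigl(rH^{(n)}\bigr),
\qquad g(\lambda):=\mathbb{E}[\log W],\ W\sim I(1,\lambda),
\]
together with $g(\lambda)\le -c\,(\lambda^{-1}\wedge 1)$ and the uniform bound $H^{(n)}\le \varphi(0)\vee(r+\rho_+)K$, is exactly the paper's inequality \eqref{eq:prem}--\eqref{eq:upper} (the paper calls $g$ by $\psi$) and leads to the same discrete Lyapunov function $g(t,h)=\mathcal R(r)(\log h-\log\epsilon)$ and the same potential-theoretic conclusion $\mathbb{E}[\mathcal N_{\epsilon,K}]\le g(0,\varphi(0))$. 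So the substance is the same.

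The one genuine issue is the continuous-time wrapper you put around this. You claim that $v(t,x)=\tfrac1c\bigl(\log(\min\{\psi(t)-x,\ell\}/\epsilon)\bigr)^+$ makes $v(t,\hat B_t)$ a bounded-below supermartingale ``up to $\tau_\varphi$'' because ``the corners have the right sign.'' Two points are wrong. First, the corner at $\psi(t)-x=\epsilon$ has the \emph{wrong} sign: as a function of $x$, $v$ is convex there (the derivative $\partial_x v$ jumps upward from $-1/(c\epsilon)$ to $0$ as $x$ increases), so $\partial_{xx}v$ carries a positive atom and $Lv>0$ there; your remark about the thin left tail of the inverse Gaussian is not a fix for a continuous-time supermartingale claim, since the driftful Brownian path between two stopping times spends sets of positive Lebesgue measure arbitrarily close to the level $\psi(\mathcal T_n)$. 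Second, your verification $\varphi'(t)\le 1/(2(\psi(t)-x))$ uses the gap bound $\psi(t)-x\le\kappa_1$, which you justify through $H^{(n)}\le(r+\rho_+)K$; but that bound only controls the gap \emph{at} the stopping times $\mathcal T_n$, whereas $\hat B_t$ can drop well below $\psi(\mathcal T_{n-1})$ between them, so $\psi(t)-\hat B_t$ is not bounded by $\kappa_1$ along the whole path. The paper sidesteps both problems by never using the continuous generator $L$: it works directly with the \emph{discrete} Markov chain $R_n=(\mathcal T_n,\mathcal H_n)$, its transition operator $P$, and the discrete inequality $Pf-f\le -\mathcal R(r)^{-1}$, which only sees the process at the stopping times where the gap is $>\epsilon$ and uniformly bounded. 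Your proposal in fact already computes precisely this discrete quantity, so the continuous layer should simply be dropped; as written it contains an incorrect supermartingale claim.
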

This theorem is based on the following intermediate statement which is a modification of Proposition \ref{prop:temps}.
\begin{prop}
\label{prop:2}
Let $(B_t,\ t\ge 0)$ be a standard one-dimensional Brownian motion. We introduce the following stopping times: $s_0=\mathcal{T}_0^K=0$ and for any $n\ge 1$:
\begin{equation}
\label{eq:prop:temps2}
s_n:=\inf\Big\{ t\ge 0:\ B_{t+\mathcal{T}_{n-1}^K}=
\varphi(\mathcal{T}_{n-1}^K)-r t \Big\}\quad\mbox{and}\quad \mathcal{T}_n^K:=(s_1+\ldots+s_n)\wedge K,
\end{equation}
where the boundary $\varphi$ satisfies  \eqref{hypo-new}. Then the following properties hold:
\begin{enumerate}
\item $(\mathcal{T}_n^K)_{n\ge 0}$ is a non-decreasing sequence which almost surely converges towards $\tau_\varphi^K$.
\item On the event \(\{s_1 + \cdots + s_n < K\}\), the probability distribution of $s_{n+1}$ given the $\sigma$-algebra $\mathcal{F}_n:=\sigma\{ \mathcal{T}_1^K,\ldots,\mathcal{T}_n^K\}$ is the inverse Gaussian distribution $I(\mathcal{H}_n/r,\mathcal{H}_n^2)$ with
\begin{equation}
\label{eq:def:hn}
\mathcal{H}_n:=\varphi(\mathcal{T}_{n}^K)
-\varphi(\mathcal{T}_{n-1}^K)+r s_n.
\end{equation}
\item Let $\mathcal{M}_\epsilon:=\inf\{n\ge 1:\ \varphi(\mathcal{T}_n^K)-B_{\mathcal{T}_n^K}\le \epsilon\}$,
\(\mathcal{M}^K:=\inf\{n\ge 1,\  \mathcal{T}_n^K = K\}\), 
\(\mathcal{M}_{\epsilon}^K=\mathcal{M}_\epsilon \wedge \mathcal{M}^K\). 
then $\mathcal{T}_{\mathcal{M}_\epsilon^K}$ and $\tau_\varphi^{\epsilon,K}$, defined in Algorithm \ref{algo2}, are identically distributed, so are $\mathcal{M}_\epsilon^K$ and $\mathcal{N}_{\epsilon,K}$.
\end{enumerate}
\end{prop}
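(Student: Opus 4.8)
The plan is to follow the blueprint of the proof of Proposition~\ref{prop:temps}, making the two structural changes dictated by Algorithm~\ref{algo2}: the horizontal lines are replaced by lines of slope $-r$, and the construction is frozen once the accumulated time reaches $K$. Throughout, the decisive hypothesis is $r>\rho_-$: it forces each tilted line $u\mapsto\varphi(\mathcal{T}_{n-1}^K)-ru$ to stay below the graph of $\varphi$ on the interval where it is used, since $\varphi(\mathcal{T}_{n-1}^K+u)-\varphi(\mathcal{T}_{n-1}^K)\ge-\rho_- u>-ru$ by \eqref{hypo-new}. As a preliminary induction I would record that, as long as $\mathcal{T}_n^K<K$, one has $B_{\mathcal{T}_n^K}<\varphi(\mathcal{T}_n^K)$ (hence $s_{n+1}>0$ a.s.): indeed $B_{\mathcal{T}_n^K}=\varphi(\mathcal{T}_{n-1}^K)-rs_n$, and the same inequality gives $\varphi(\mathcal{T}_{n-1}^K)-rs_n<\varphi(\mathcal{T}_n^K)$. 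Note also $\mathcal{H}_n\ge(r-\rho_-)s_n>0$ a.s., and that each $s_n$, being the first passage of a Brownian motion with positive drift $r$ to a positive level, is a.s.\ finite.

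For Part~1, monotonicity and the bound $\mathcal{T}_n^K\le K$ are immediate, so $(\mathcal{T}_n^K)$ converges a.s.\ to some $\mathcal{T}_\infty^K\le K$. The preliminary step shows that on $\{\mathcal{T}_n^K<K\}$ the Brownian path stays strictly below $\varphi$ on $[0,\mathcal{T}_n^K]$, whence $\mathcal{T}_n^K\le\tau_\varphi\wedge K$ for every $n$, and therefore $\mathcal{T}_\infty^K\le\tau_\varphi\wedge K$. For the reverse inequality I distinguish two cases. If the partial sums $s_1+\cdots+s_n$ reach $K$ at some finite index, then $\mathcal{T}_\infty^K=K$, which with $\mathcal{T}_\infty^K\le\tau_\varphi\wedge K\le K$ forces equality. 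Otherwise $\sum_n s_n=\mathcal{T}_\infty^K\le K$, in particular $s_n\to0$; letting $n\to\infty$ in $B_{\mathcal{T}_n^K}=\varphi(\mathcal{T}_{n-1}^K)-rs_n$ and using continuity of $B$ and $\varphi$ gives $B_{\mathcal{T}_\infty^K}=\varphi(\mathcal{T}_\infty^K)$, so $\tau_\varphi\le\mathcal{T}_\infty^K$, and again equality with $\tau_\varphi\wedge K$ follows.

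For Part~2, I would apply the strong Markov property at the stopping time $\mathcal{T}_n^K$: the process $\tilde B_t:=B_{\mathcal{T}_n^K+t}-B_{\mathcal{T}_n^K}$ is a Brownian motion independent of $\mathcal{F}^B_{\mathcal{T}_n^K}\supseteq\mathcal{F}_n$. On $\{s_1+\cdots+s_n<K\}$ the identity $\varphi(\mathcal{T}_n^K)-B_{\mathcal{T}_n^K}=\varphi(\mathcal{T}_n^K)-\varphi(\mathcal{T}_{n-1}^K)+rs_n=\mathcal{H}_n$ holds, with $\mathcal{H}_n$ an $\mathcal{F}_n$-measurable positive quantity, and $s_{n+1}=\inf\{t\ge0:\tilde B_t+rt=\mathcal{H}_n\}$ is the first-passage time of a Brownian motion with positive drift $r$ to the level $\mathcal{H}_n$. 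The classical density of this first-passage time, namely $a(2\pi t^3)^{-1/2}\exp(-(a-rt)^2/(2t))$ at level $a$, is precisely the $I(\mathcal{H}_n/r,\mathcal{H}_n^2)$ density, which yields the claim; the initialization case is the same computation with $\mathcal{H}_0=\varphi(0)$ and no conditioning.

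For Part~3, I rewrite $\mathcal{M}_\epsilon=\inf\{n\ge1:\mathcal{H}_n\le\epsilon\}$ using $\varphi(\mathcal{T}_n^K)-B_{\mathcal{T}_n^K}=\mathcal{H}_n$, and observe that $\mathcal{H}_n$ is exactly the variable $H$ produced at the $n$-th iteration of Algorithm~\ref{algo2}, while $s_1+\cdots+s_n$ is the variable $T$; hence the loop halts at step $\mathcal{M}_\epsilon\wedge\mathcal{M}^K=\mathcal{M}_\epsilon^K$ with outcome $T\wedge K=\mathcal{T}_{\mathcal{M}_\epsilon^K}^K$. Since Part~2 identifies the law of $(s_1,\dots,s_n)$ in the Brownian construction with the law of the successive inverse Gaussian increments drawn by the algorithm, the stopped pairs $(\mathcal{T}_{\mathcal{M}_\epsilon^K}^K,\mathcal{M}_\epsilon^K)$ and $(\tau_\varphi^{\epsilon,K},\mathcal{N}_{\epsilon,K})$ are equal in distribution. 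I expect the main obstacle to be the bookkeeping around the truncation at $K$: keeping track of which identities hold only on $\{s_1+\cdots+s_n<K\}$ and verifying that on the complementary event the algorithm has already stopped so nothing is lost, together with the inductive check that the tilted lines never meet $\varphi$ prematurely, which is exactly where $r>\rho_-$ enters.
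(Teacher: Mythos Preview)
Your proposal is correct and follows essentially the same approach as the paper. The paper in fact only writes out Part~2 (via the strong Markov property and the identification of the inverse Gaussian law of the first passage of drifted Brownian motion), leaving Parts~1 and~3 to the reader as straightforward adaptations of Proposition~\ref{prop:temps}; you have supplied those missing details, in particular the role of $r>\rho_-$ in keeping the tilted lines strictly below $\varphi$ and the case split on whether the partial sums reach $K$.
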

\begin{proof}[Proof of Proposition \ref{prop:2}] The first and the third part of the proof are left to the reader. They need similar arguments as those presented in Proposition \ref{prop:temps}. 
 Here the monotonicity property is just replaced by \eqref{hypo-new} which permits us easily to prove that $\mathcal{T}_n^K\le \tau_\varphi^K$.\\
Let us now focus our attention to the second part of the statement. Due to the definition of $s_{n+1}$ and since $\{\mathcal{T}_n^K<K\}$, we get $B_{\mathcal{T}_n^K}=\varphi(\mathcal{T}_{n-1}^K)-rs_n$. Hence, we have 
\begin{align*}
s_{n+1}&=\inf\{t\ge 0:\ B_{t+\mathcal{T}_n^K}-B_{\mathcal{T}_n^K}=\varphi(\mathcal{T}_n^K)-B_{\mathcal{T}_n^K}-r t\}\\
&=\inf\{t\ge 0:\ W_t=\mathcal{H}_n -r t \},
\end{align*}
where $W_t=B_{t+\mathcal{T}_n^K}-B_{\mathcal{T}_n^K}$ is a standard Brownian motion independent of 
$\mathcal{F}_n$ 
and the $\mathcal{F}_n$ adapted r.v. $\mathcal{H}_n$ 
is defined by \eqref{eq:def:hn}. The distribution of $s_{n+1}$ corresponds 
to the distribution of the first passage time of the standard Brownian motion 
with drift at the constant level $\mathcal{H}_n$. The probability distribution is well known (see, for instance 
\cite[p.~197]{karatzas}):
\[
\mathbb{P}(s_{n+1}\in dt|\mathcal{F}_n)=\frac{\mathcal{H}_n}{\sqrt{2\pi t^3}}\,\exp -\Big\{ \frac{(\mathcal{H}_n-r t)^2}{2t} \Big\}\, dt,
\] 
we can consequently identify the inverse Gaussian distribution $I(\mathcal{H}_n/r,\mathcal{H}_n^2)$.
\end{proof}
\begin{proof}[Proof of Theorem 
\ref{thm2}] ~\\
\textbf{Step 1.} We can prove the convergence in distribution of $\tau_\varphi^{\epsilon,K}$ towards $\tau_\varphi^K$ using similar arguments as those presented in the proof of Theorem \ref{thm1}. The upper-bound in \eqref{eq:thm:21} is an adaptation of \eqref{eq:repar-facil} which requires that $\mathcal{T}_n^K\le \tau_\varphi^K$ and that $\tau_\varphi^{\epsilon,K}$ and $\mathcal{T}_{\mathcal{M}_\epsilon^K}$ are identically distributed. These conditions are satisfied, see Proposition~\ref{prop:2}. For the lower-bound in \eqref{eq:thm:21}, we obtain
\[
F_{\epsilon,K}(t-\epsilon)\le \mathbb{P}(|\mathcal{T}_{\mathcal{M}_\epsilon^K}
-\tau_\varphi^K|>\epsilon)+F_{\epsilon,K}(t),
\]
see \eqref{eq:etap1} for the details. Let us note that $|\tau_{\mathcal{M}_\epsilon^K}-\tau_\varphi^K|>\epsilon$ leads to the condition $\tau_{\mathcal{M}_\epsilon^K}<K$. Hence $\mathcal{M}_\epsilon^K=\mathcal{M}_\epsilon$. Using the Markov property, the following bound holds:
\[
\mathbb{P}(|\mathcal{T}_{\mathcal{M}_\epsilon^K}
-\tau_\varphi^K|>\epsilon)\le 1-\mathbb{P}\Big( \sup_{0\le u\le \epsilon}\tilde{B}_u\ge \epsilon+\sup_{0\le u\le \epsilon}\varphi(\mathcal{T}_{
\mathcal{M}_\epsilon}+u)-\varphi(\mathcal{T}_{
\mathcal{M}_\epsilon}) \Big).
\]
Here $(\tilde{B}_t,\, t\ge 0)$ stands for a standard Brownian motion independent of $\mathcal{T}_{\mathcal{M}_\epsilon}$.
Combining Hypothesis \eqref{hypo-new} and the reflection principle of the Brownian motion leads to
\begin{align*}
\mathbb{P}(|\mathcal{T}_{\mathcal{M}_\epsilon}
-\tau_\varphi^K|>\epsilon)&\le 1-\mathbb{P}(|\tilde{B}_\epsilon|\ge \epsilon(1+\rho_+))\le (1+\rho_+)\sqrt{\frac{2\epsilon}{\pi}},
\end{align*}
and consequently to the lower bound \eqref{eq:thm:21}.\\
\textbf{Step 2.} Let us now focus our attention to the averaged number of steps in Algorithm \ref{algo2}, denoted by $\mathcal{N}_{\epsilon,K}$. A rough description of the method: we aim to construct a Markov chain and to describe the associated potential. The classical potential theory then permits us to obtain the announced bound. We introduce the Markov chain $R_n:=(\mathcal{T}_n,\mathcal{H}_n)$ for $n\ge 0$. 
We recall that $\mathcal{T}_n=s_1+\ldots+s_n$ is defined by \eqref{eq:prop:temps2} and $\mathcal{H}_n$ by \eqref{eq:def:hn}. The stopping time $\mathcal{M}^K_\epsilon$ defined in Proposition \ref{prop:2} can also be interpreted as the first time the Markov chain $(R_n,\ n\ge 0)$ goes out of the domain $E:=[0,K]\times ]\epsilon,+\infty]$. \\
Let us consider the function $f(x,y)=\log(y)$, defined on $E$, and denote by $P$ the infinitesimal generator associated to the Markov chain $(R_n)_{n\ge 0}$. By Proposition~\ref{prop:2} and for any $(t,h)\in E$, we obtain
\begin{align*}
Pf(t,h)=\mathbb{E}\Big[\log(\varphi(t
+\hat{G})-\varphi(t)+r \,\hat{G})\Big],
\end{align*}
where $\hat{G}$ is an inverse Gaussian distributed random variable with the following density function:
\[
p(x)=\frac{h}{\sqrt{2\pi x^3}}\exp\left\{ -\frac{(h-r x)^2}{2x} \right\},\quad x\ge 0.
\]
By \eqref{hypo-new},  $\varphi(t+\hat{G})-\varphi(t)\le \rho_+\, \hat{G}$, we get
\begin{align}\label{eq:prem}
Pf(t,h)-f(t,h)&\le \log\Big(1+\frac{\rho_+}{r}\Big)+\mathbb{E}\Big[\log\Big(\frac{
r \hat{G}}{h}\Big)\Big].
\end{align}
Let us find now an explicit upper bound of $Pf-f$. 
Using first the change of variables $u=r x/h$ and secondly $u\mapsto 1/u$, we get
\begin{align}\label{eq:calc}
\mathbb{E}\Big[\log\Big(\frac{r \hat{G}}{h}\Big)\Big]&=\int_0^\infty \log\Big( \frac{r x}{h} \Big)\frac{h}{\sqrt{2\pi x^3}}\exp-\frac{(h-r x)^2}{2x}\, dx \nonumber\\
&=\sqrt{\frac{hr}{2\pi}}\int_0^\infty \frac{\log(u)}{u^{3/2}}\, \exp-\frac{hr(1-u)^2}{2u}\, du\nonumber\\
&=\sqrt{\frac{h r}{2\pi}}\int_1^\infty \frac{(1-u)\log(u)}{u^{3/2}}\exp-\frac{h r(1-u)^2}{2u}\, du.
\end{align}
It is then obvious that $\mathbb{E}\Big[\log\Big(\frac{r\hat{G}}{h}\Big)\Big]< 0$. Let us now give a more precise upper-bound. 
We set $\alpha=h r$, then \eqref{eq:calc} emphasizes that $\mathbb{E}\Big[\log\Big(\frac{r\hat{G}}{h}\Big)\Big]$ only depends 
on the parameter $\alpha$, this dependence being continuous. Let us therefore denote this function $\psi(\alpha)$ 
(see Figure~\ref{fig:MCpsi} below representing $\psi$ obtained with the Monte-Carlo method sample size: $10\,000$).\\
\begin{figure}[t]
\begin{center}
\includegraphics[scale=0.4]{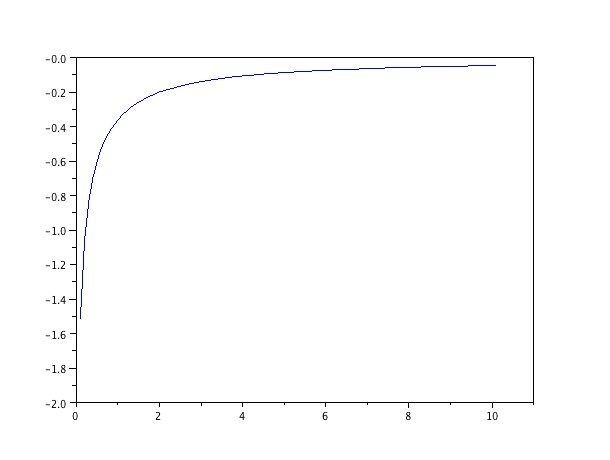}
\end{center}
\caption{Monte Carlo approximation of the function \(\psi\)}\label{fig:MCpsi}
\end{figure}
Simple computations lead to
\begin{align}\label{eq:alpha}
\psi(\alpha)&:=\mathbb{E}\Big[\log\Big(\frac{r \hat{G}}{h}\Big)\Big]=-\sqrt{\frac{\alpha}{2\pi}}\int_0^\infty \frac{u\log(1+u)}{(1+u)^{3/2}}\exp-\frac{\alpha u^2}{2(1+u)}\, du\\
&\le -\sqrt{\frac{\alpha}{2\pi}}\int_0^\infty \frac{u\log(1+u)}{(1+u)^{3/2}}\exp-\frac{\alpha u}{2}\, du
\nonumber\\
&\le
-\frac{1}{\sqrt{2\pi}}\int_0^\infty  \frac{w\log(1+w/\alpha)}{(\alpha+w)^{3/2}}\, \exp-\frac{w}{2}\,dw\nonumber\\
&\le
-\frac{1}{\sqrt{2\pi}}\int_{1/2}^\infty  \frac{w\log(1+w/\alpha)}{(\alpha+w)^{3/2}}\, \exp-\frac{w}{2}\,dw.\nonumber\end{align}Using the inequality $(\alpha+w)\le (1+2\alpha)w$, we get
\begin{align*}
\psi(\alpha)
&  \le -\frac{\log(1+(2\alpha)^{-1})}{(1+2\alpha)^{3/2}\sqrt{2\pi}}\int_{1/2}^\infty \frac{1}{\sqrt{w}}\exp-\frac{w}{2}\,dw
\\
&\le -\frac{\log(1+(2\alpha)^{-1})}{(1+2\alpha)^{3/2}}\,\mathbb{P}(G\ge 1/2),
\end{align*} 
where $G$ is a standard gaussian r.v. and so $\mathbb{P}(G\ge 1/2)\approx 0.3085$\\
We deduce from the previous upper-bound that $\lim_{\alpha\to 0^+}\psi(\alpha)=-\infty$. Moreover the right hand side is a non decreasing function with respect to the variable $\alpha$. Hence 
\begin{equation}\label{eq:num1}
\psi(\alpha)\le -\frac{\log(3/2)}{3\sqrt{3}}\,\mathbb{P}(G\ge 1/2)\approx -0.0241,\quad\mbox{for}\ \alpha\le 1.
\end{equation}
Let us observe what happens for large values of the variable $\alpha$. The Laplace method implies that
\[
\psi(\alpha)\sim -\frac{1}{2\alpha}\quad \mbox{as}\ \alpha\to \infty.
\]
Let us prove now that there exists a constant $c>0$ such that 
\begin{equation}\label{eq:num2}
\psi(\alpha)\le -\frac{c}{\alpha},\quad \mbox{for any}\  \alpha\ge 1. 
\end{equation}
For $\alpha\ge 1$, we get
\begin{align*}
\psi(\alpha)&\le-\sqrt{\frac{\alpha}{2\pi}}\int_0^\infty \frac{u\log(1+u)}{(1+u)^{3/2}}\,\exp-\frac{\alpha u^2}{2}\,du\\
&\le -\sqrt{\frac{\alpha}{2\pi}}\int_0^1 \frac{u\log(1+u)}{(1+u)^{3/2}}\,\exp-\frac{\alpha u^2}{2}\,du.
\end{align*}
Due to the convexity property of the logarithm function ($\log (1+u)\ge \log(2) u$) and the Cauchy-Schwarz inequality, we obtain
\begin{align*}
\psi(\alpha)&\le -\frac{\log(2)}{\alpha 2^{3/2}}\Big(\frac{1}{2}\mathbb{E}[G^2]-\mathbb{E}[G^21_{\{G\ge \sqrt{\alpha}\}}]  \Big)\\
&\le -\frac{\log(2)}{\alpha 2^{3/2}}\Big( \frac{1}{2}-\sqrt{\mathbb{E}[G^4]}\sqrt{\mathbb{P}(G\ge \sqrt{\alpha})} \Big)\\
&\le -\frac{\log(2)}{\alpha 2^{3/2}}\Big( \frac{1}{2}-\frac{\sqrt{3}}{2}\,e^{-\alpha} \Big)\le -\frac{\log(2)}{\alpha 2^{5/2}}(1-\sqrt{3}e^{-1}),\quad\mbox{for}\ \alpha\ge 1.
\end{align*}
We deduce that $\psi(\alpha)\le -c/\alpha$ with $c\approx 0.0445$ when $\alpha\ge 1$. Combining both inequalities \eqref{eq:num1} and \eqref{eq:num2} leads to the existence of a constant $c>0$ such that
\begin{equation}
\label{eq:upper}
\psi(\alpha)\le -c\ \Big( \frac{1}{\alpha}\wedge 1\Big).
\end{equation}
By \eqref{eq:prem}, the following upper-bound holds: for $f(x,y)=\log(y)$,
\begin{align}
\label{eq:bou}
Pf(t,h)-f(t,h)&\le \log\Big( 1+\frac{\rho_+}{r} \Big)-c\Big(\frac{1}{hr}\wedge 1\Big)\nonumber\\
&\le \frac{\rho_+}{r}-c\Big(\frac{1}{hr}\wedge 1\Big),\quad h\ge 0,\ t\ge 0.
\end{align}
Due to the definition of $\rho_+$, we know that 
\[
h\le \varphi(0)\vee(r+\rho_+)t\le \varphi(0)\vee(r+\rho_+)K,
\] 
where $\varphi$ is the boundary the process has to hit. 
In other words, there exist two constants $\kappa_0>0$ and $\kappa_1>0$ such that for any $\rho_+\le \kappa_0$ and any $(K,r)$ satisfying $(r+\kappa_0)K\le \kappa_1$ the following bound holds
\(
\rho_+\le \frac{c}{2}\,\Big(\frac{1}{h}\wedge r\Big).
\)
Hence:
\[
Pf(t,h)-f(t,h)\le -\frac{c}{2r}\Big(\frac{1}{\varphi(0)\wedge \kappa_1}\wedge r\Big)=:-\mathcal{R}^{-1}(r).
\]
We deduce that the function $g(t,h)$ defined by $g(t,h)=\mathcal{R}(r)\,(f(t,h)-\log\epsilon)$ satisfies $g(t,h)\ge 0$ for any $(t,h)\in E$ and $Pg(t,h)-g(t,h)\le -1$ on $E$. The potential theory therefore implies:
\[
\mathbb{E}[\mathcal{N}_{\epsilon,K}]\le g(0,\varphi(0))\le \mathcal{R}(r)(\log(\varphi(0))-\log(\epsilon)).
\] 
We finally deduce the existence of $a>0$ and $b>0$ such that $\mathbb{E}[\mathcal{N}_{\eps,K}]\le (a+br)|\log\epsilon|$ for $\epsilon$ small enough.\\
For the particular case of a non increasing boundary function it suffices to vanish $\rho_+$ in \eqref{eq:bou} and to apply the same arguments of the potential theory in order to get \eqref{eq:thm:conv13}
\end{proof}
\section{Examples and numerics.}\label{sec:examples}
In this section, we present three different examples which nicely illustrate the efficiency of these new algorithms \ref{algo1} and \ref{algo2}. 
\mathversion{bold}
\subsection{Brownian hitting time of $\varphi(t)=\sqrt{1+\alpha t}$ }
\mathversion{normal}
Let us first consider an application of Theorem \ref{thm1}. We observe that $\varphi(t)=\sqrt{1+\alpha t}$ is an increasing function satisfying \eqref{hypo}, \eqref{hypo+} and \eqref{hypo++} for $\alpha\in [0,1]$. Consequently  Algorithm \ref{algo1} converges and permits us to obtain an approximation of the hitting time $\tau_\varphi$. In the figures, we present the link between the averaged number of steps and $\epsilon$ which characterizes the approximation error size.\\
The first figure (resp. the second one) concerns: $\alpha=1$ (resp. $\alpha=0.01$), $\epsilon=0.5^n$ ($n$ is represented on the horizontal axis) and the number of simulation in order to estimate the averaged number of steps is $10\,000$. 

\begin{figure}[ht]
\begin{center}
\subfloat[{$\alpha = 1$}]{
\includegraphics[width=0.48\textwidth]{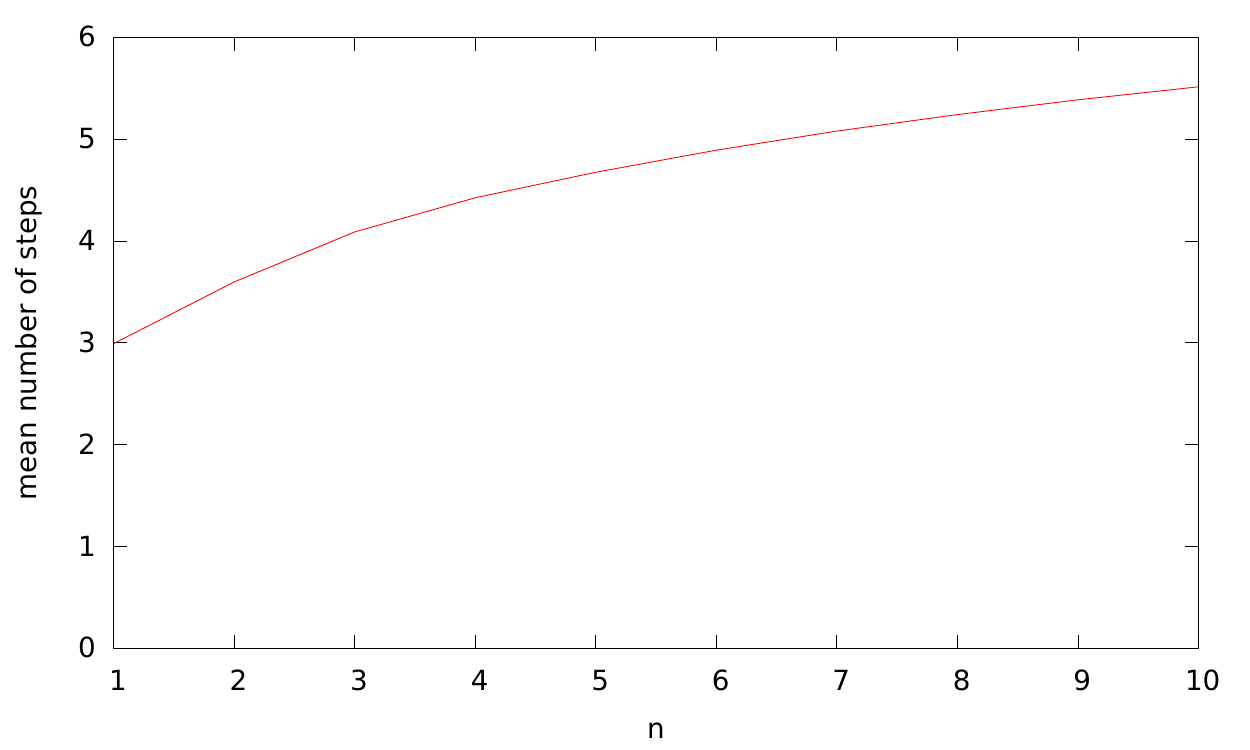}
}
\subfloat[{$\alpha = 0.01$}]{
\includegraphics[width=0.48\textwidth]{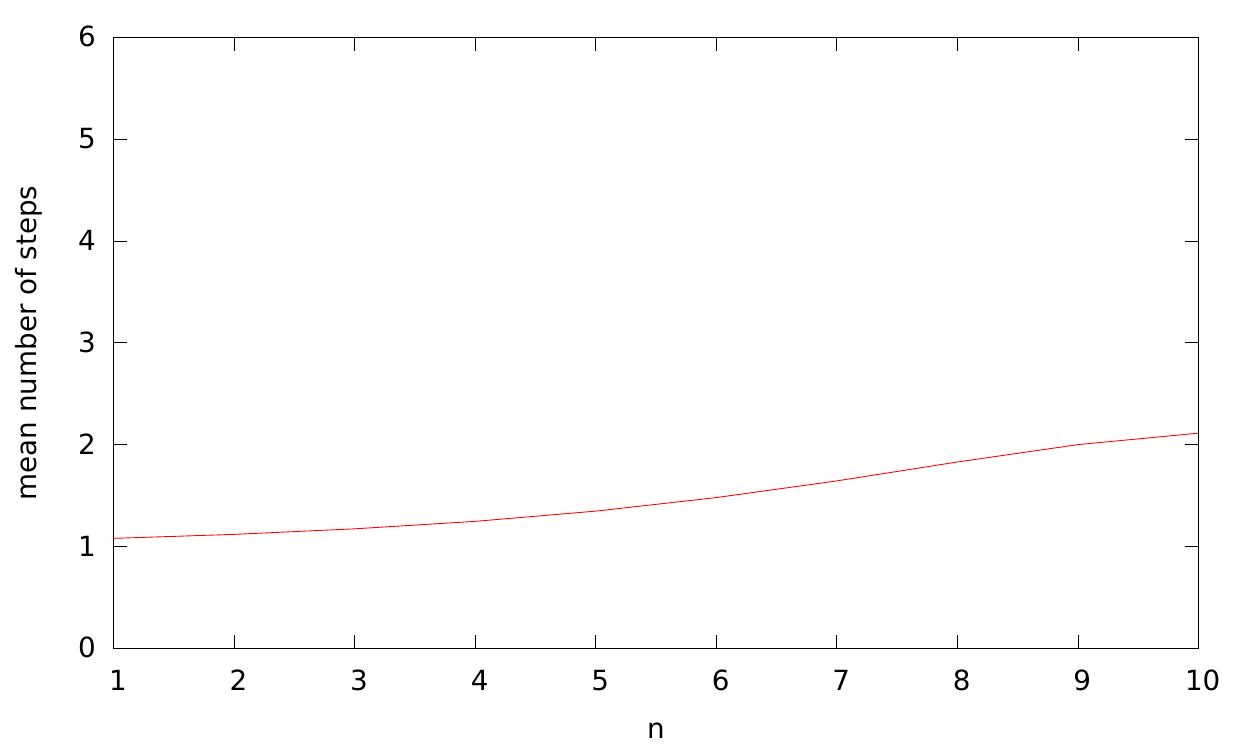}
}
\end{center} 
\caption{\(\mathbb{E}(\mathcal{N}_\epsilon)\): mean number of steps for \(\epsilon = 0.5^n\) as a function of \(n\). The boundary is 
$\varphi(t)=\sqrt{1+\alpha t}$.}\label{fig:nombredetapetmoyen}
\end{figure}
Let us now present the approximate distribution of the hitting time.
\begin{figure}[ht]
\begin{center}
\subfloat[{$\alpha = 1$}]{
\includegraphics[width=0.48\textwidth]{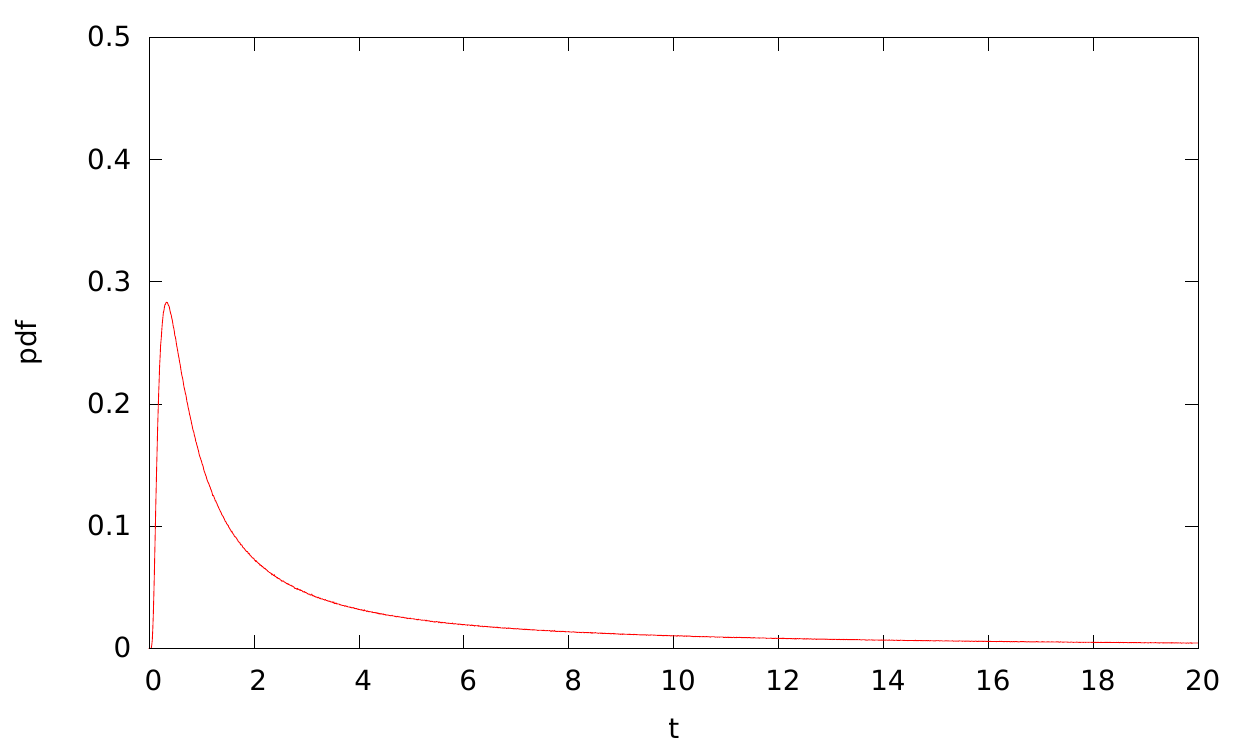}
}
\subfloat[{$\alpha = 0.01$}]{
\includegraphics[width=0.48\textwidth]{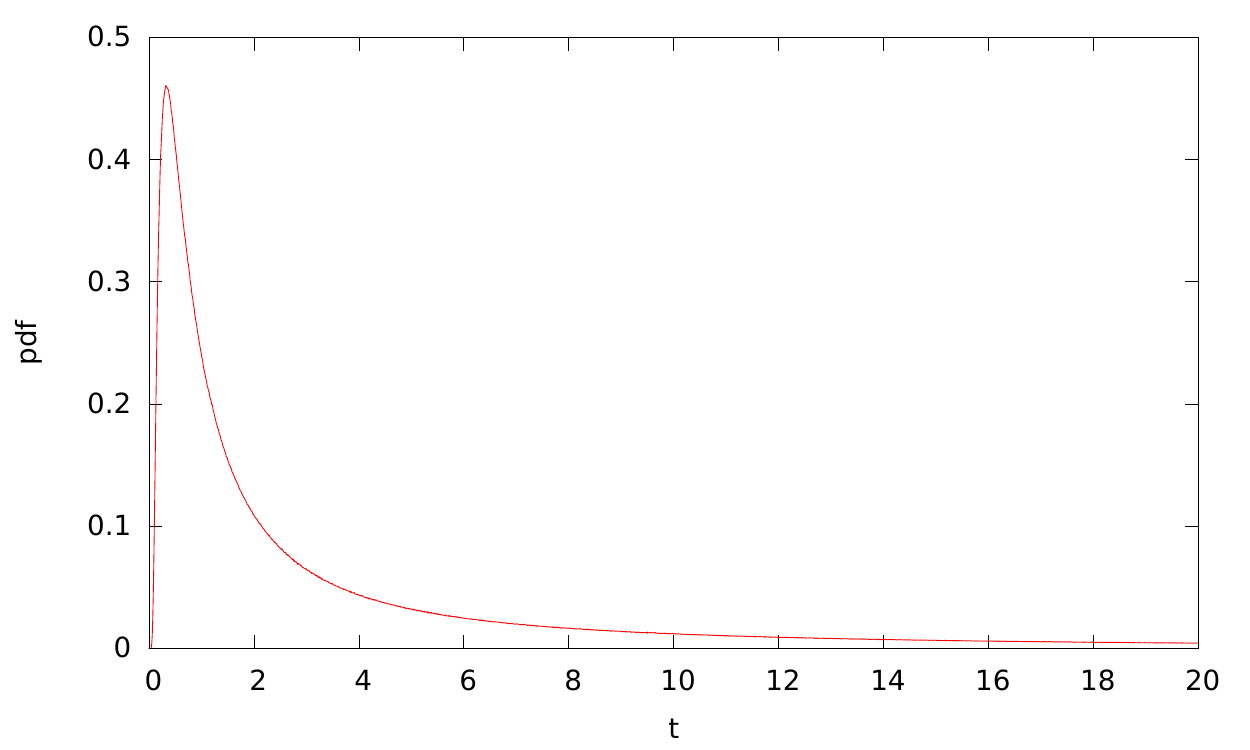}
}
\end{center} 
\caption{Empirical distribution of the approximate first hitting time of the boundary 
$\varphi(t)=\sqrt{1+\alpha t}$.}\label{fig:distribfht}
\end{figure}

\mathversion{bold}
\subsection{Brownian hitting time of $\varphi(t)=\alpha+\beta\cos(\omega t)$ }
\mathversion{normal}
Let us now consider the first time the Brownian motion hits the periodic boundary $\varphi(t)=\alpha+\beta\cos(\omega t)$. Since the boundary is not an increasing function, 
we shall use Algorithm \ref{algo2}.  Theorem \ref{thm2} ensures that the algorithm converges. Let us therefore use the Monte-Carlo method in order to estimate precisely 
the average number of steps. 
As explained in the previous section, the simulation procedure permits to approximation of the stopping time $\tau_\varphi\wedge K$ for some given fixed time $K$.
Figure~\ref{fig:mean_number_cos} illustrates the approximation \(\tau_\varphi\) by \(\tau^{\epsilon,K}_\varphi\), where the parameters are 
fixed at $\alpha=3.5$, $\beta=3$ and $\omega=\pi/2$. 
The maximal time are $K=20$ on one hand and \(K=100\) on the other hand and the error rate is given by $\epsilon=0.5^n$, for \(1\leq n\leq 10\). 
A sample of $10E8$ paths has been simulated to approximate the mean. 
\begin{figure}[h]
\begin{center}
\subfloat[\(\mathbb{E}(\mathcal{N}_{1/2^n,K})\) versus \(n\)]{
\includegraphics[width=0.48\textwidth]{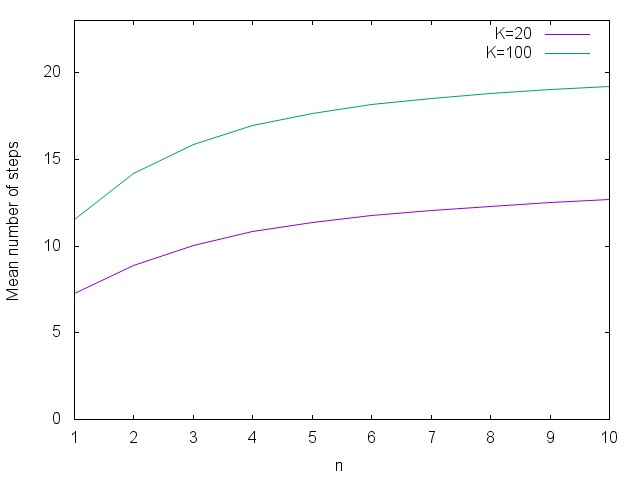}}
\subfloat[Distribution of \(\tau^{1/2^{n},K}_{\varphi}\). \(n=10\), \(K=20\).]{
\includegraphics[width=0.48\textwidth]{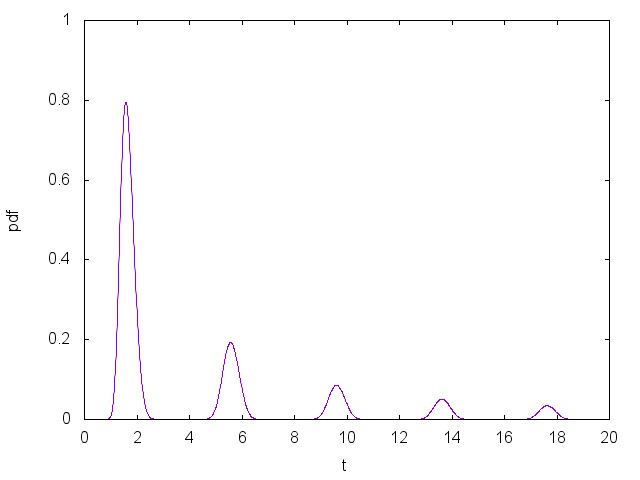}}
\end{center}
\caption{Approximation of \(\tau_{\varphi}\) with $\varphi(t)=3.5+3\cos(\pi t/2)$ }\label{fig:mean_number_cos}
\end{figure}
We know that the mean number of steps is a decreasing function of \(\epsilon\) and an increasing function of  \(K\). 
Figure~\ref{fig:effetoftruncation} gives the evolution of the mean number of steps as a function of the truncation \(K\).
\begin{figure}[h]
\begin{center}
\includegraphics[width=0.6\textwidth]{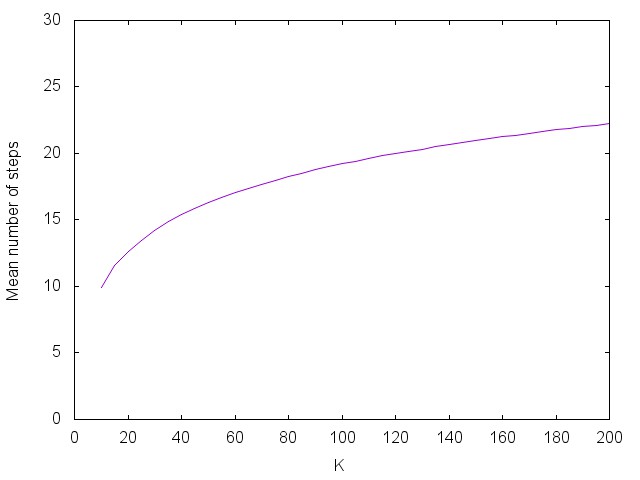}
\caption{\(\mathbb{E}(\mathcal{N}^{\epsilon,K}_\varphi)\) as a function of \(K\).}\label{fig:effetoftruncation}
\end{center}
\end{figure}
In practice, we obtained easily an impressively accurate approximation of \(\tau_\varphi\).

\subsection{The first time the Ornstein Uhlenbeck process hits the boundary $\varphi(t)=\alpha+\beta\cos(\omega t)$ }
\mathversion{normal}
This last section concerns a particular framework where passage times play a crucial role: the spiking neuron analysis. Let us roughly explain how neuronal firing activities has been modeled. The potential difference that exists across the cell membrane is modeled as an Ornstein-Uhlenbeck process. As soon as this membrane potential exceeds a given threshold, the neuron releases a rapid electrical signal called a spike and the membrane potential is directly reset to an initial voltage. Hence the interspike interval is identify with the first hitting time of an OU process whereas the spike train forms a renewal process. Such a stochastic leaky integrate and fire (LIF) neuronal model is a good compromise between realism and mathematical tractability \cite{Burkitt-2006, Gerstner, Tuckwell-1989, Tuckwell-Wan-2000}.

The standard OU process can be adapted when the inputs are time-dependent. It especially concerns many situations where the sensory stimuli, like sound, contain an oscillatory component. We observe then   oscillating membrane potentials in the neuron, generating rhythmic spiking patterns (see Iolov, Ditlevsen and Longtin \cite{Iolov-Ditlevsen-Longtin-2014} and references therein).

In such a model with time-dependent forcing, the membrane voltage denoted by $(V_t,\,t\ge 0)$ satisfies the following stochastic differential equation:
\[
dV_t=\Big( \chi(t)-\frac{V_t}{\tau}\Big)\, dt+\sigma\, dB_t,
\]
until it reaches the voltage threshold $V_{\rm th}$. Here $\chi$ represents a current acting on the cell, $\tau$ is the membrane time constant,  $\sigma$ is
the strength of the stochastic fluctuations and finally $(B_t,\, t\ge 0)$ stands for the standard Brownian motion. The length of the interspike interval is therefore directly related to the passage time of the stochastic process $(V_t)$ through the threshold $V_{\rm th}$.

Let us introduce a simple change of variable, given by $X_t=V_t-\varphi(t)$ with $\varphi$ the deterministic function satisfying:
\[
\varphi'(t)=\chi(t)-\frac{\varphi(t)}{\tau},\quad \varphi(0)=V_0.
\]
By straightforward computations, we can prove that $X_t$ is a classical OU process. In other words, the first passage time of the voltage $V_t$ through the given threshold $V_{\rm th}$ is almost surely equal to the first passage time of the OU process $(X_t)$ through the curved boundary $\varphi$. Simulating hitting times to curved boundaries for OU processes is therefore a main task.

That's why, we focus our attention to the last example which concerns the one-dimensional Ornstein-Uhlenbeck process defined  by:
\begin{equation}\label{eq:orn}
dX_t=dB_t-\lambda X_t\,dt,\quad X_0=x_0.
\end{equation}
The aim is to approximate the first passage time through the particular simple curved boundary $\varphi(t)=\alpha+\beta\cos(\omega t)$ where $\varphi(0)>x_0$.

Since the Ornstein-Uhlenbeck process can be represented as a time-changed Brownian motion, the question is directly related to the main results of this study. Indeed the solution of \eqref{eq:orn} is given by
\[
X_t=e^{-\lambda t}\left( x_0+\int_0^t e^{\lambda s}dB_s \right),\quad t\ge 0.
\]
Using Levy's theorem, $(X_t,\, t\ge 0)$ has the same distribution as $(Y_t,\, t\ge 0)$ defined by
\[
Y_t:=e^{-\lambda t}\left(x_0+W_{u(t)} \right),\quad t\ge 0,
\]
with $u(t):=\frac{1}{2\lambda}\,(e^{2\lambda t}-1)$ and $W$ a standard Brownian motion. We deduce that 
\[
\mathcal{T}_\varphi:=\inf\{t\ge 0:\ X_t=\varphi(t)\}
\]
has the same distribution as 
\begin{align*}
\hat{\mathcal{T}}_\varphi & :=\inf\Big\{ t\ge 0:\ e^{-\lambda t}\Big(x_0+W_{u(t)}\Big)=\varphi(t) \Big\}\\
&=\inf\Big\{u^{-1}(s)\ge 0:\ W_s=\varphi(u^{-1}(s)) e^{\lambda u^{-1}(s)}-x_0\Big\}\\
&=u^{-1}(\tau_\psi),
\end{align*}
where 
\[
\tau_\psi:=\inf\{t\ge 0:\, W_t=\psi(t)\},\quad \psi(t):=\sqrt{1+2\lambda t}\ \varphi\Big(\frac{\log(1+2\lambda t)}{2\lambda}\Big)-x_0.
\]
Consequently, in order to simulate the Ornstein-Uhlenbeck hitting time $\mathcal{T}_\varphi\wedge K$ for some $K$, we simply use Algorithm \ref{algo2} and propose an approximation of the Brownian hitting time $\tau_\psi\wedge \tilde{K}$ with $\tilde{K}:=u(K)=(e^{2\lambda K}-1)/(2\lambda)$.\\
Let us note that a straightforward computation leads to the following upper-bound:
\[
|\psi'(t)|\le \frac{\lambda\alpha+\lambda\beta+\omega\beta}{\sqrt{1+2\lambda t}}\le \lambda\alpha+\lambda\beta+\omega\beta,\quad t\ge 0.
\]
In other words, the continuous curve $\psi$ satisfies Hypothesis \eqref{hypo-new}: Algorithm \ref{algo2} therefore converges and Theorem \ref{thm2} can be applied.\\
In the following numerical experiences, we will choose $r=0.5+\lambda\alpha+\lambda\beta+\omega\beta$. 
Figures~\ref{fig:premiereou} and \ref{fig:oudeuxieme} concern the following choice of parameters: $x_0=0$, $\alpha=2$, $\beta=1$, $\omega=\pi/5$, $\lambda=0.5$.
We have chosen $K=5$ for Figure~\ref{fig:premiereou} and $K=10$ for Figure~\ref{fig:oudeuxieme}.
In both cases, the first figure represents the average number of steps as a function of \(n\) where the approximation parameter \(\epsilon\) is chosen as
\(0.5^n\), for \(n=1,\cdots,10\). 
The average has been estimated using $5.10E6$ simulations. 
The second figure represents the distribution  of $\mathcal{T}_\varphi\wedge K$ for \(n=10\). 

We observe that the \textit{change of time} \(\tilde{K} = (e^{2\lambda K}-1)/(2\lambda)\)  increases very fast with \(K\) and the number becomes quite large when \(K\) increases.
Note however that the number of random variables we have to simulate keeps relatively small in comparaison with the use of
a classical stopped Euler scheme usually used to approximate \(\mathcal{T}_\varphi\). 
Each Euler scheme introduces a bias, which goes to \(0\) as the time discretization length goes to \(0\). We have plotted the error on
Figure~\ref{fig:compare_euler_schemes} for the approximation of \(\mathbb{E}(\mathcal{T}_\varphi^K)\). We can observe that a simple 
Euler scheme has an order of convergence \(1/2\). We illustrate the improvement of this rate of convergence using two
particular modifications of the scheme: the first one from  \cite{Gobet-2000} (i.e. we take into account the first order term of 
the probability to hit the boundary between two time successive time-steps), the second-one from \cite{Gobet-Menozzi-10} (i.e. an adapted modification of the boundary). 
Both modifications yield to a scheme of order \(1\). 

The numerical cost of our algorithm increases very slowly as the parameter \(\epsilon\) goes to \(0\). 
The numerical comparisons are done with \(\epsilon=2^{-20}\), such that the error is almost negligible. 
The time we need is similar to the time for an Euler scheme with step \(0.01\) and the Brownian bridge modification with time step \(0.02\).
Empirically, we conclude that our scheme over performs previous ones if one needs an accuracy larger than those obtained with
an Euler scheme with time step \(0.01\).

\begin{figure}[h]
\begin{center}
\subfloat[\(\mathbb{E}(\mathcal{N}_{1/2^n,K})\) versus \(n\)]{
\includegraphics[width=0.48\textwidth]{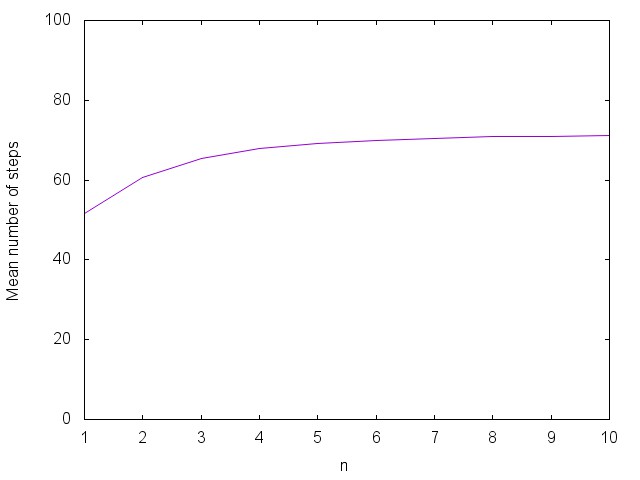}}
\subfloat[Distribution of \(\mathcal{T}^{K,\epsilon}_\varphi\)  (\(\epsilon = 1/2^{10}\)).]{
\includegraphics[width=0.48\textwidth]{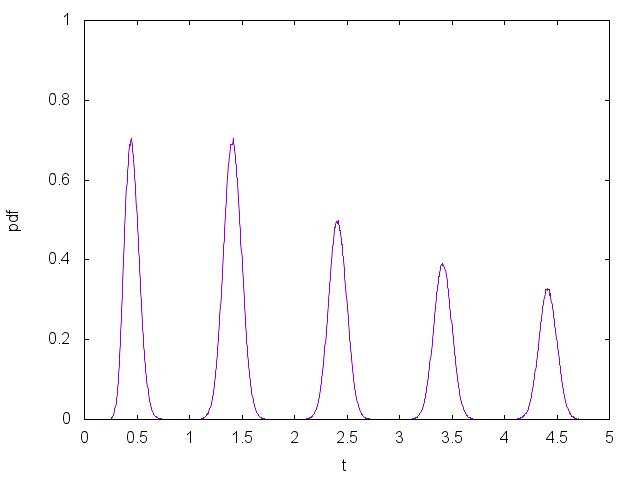}}
\end{center}
\caption{First hitting time of \(\varphi(t)=\alpha+\beta\cos(\omega t)\) by an Ornstein Uhlenbeck process solution of \eqref{eq:orn} ($\alpha=2$, $\beta=1$,  $\omega=2\pi$, $\lambda=0.5$, \(K=5\).)}\label{fig:premiereou}
\end{figure}

\begin{figure}[h]
\begin{center}
\subfloat[\(\mathbb{E}(\mathcal{N}_{1/2^n,K})\) versus \(n\)]{
\includegraphics[width=0.48\textwidth]{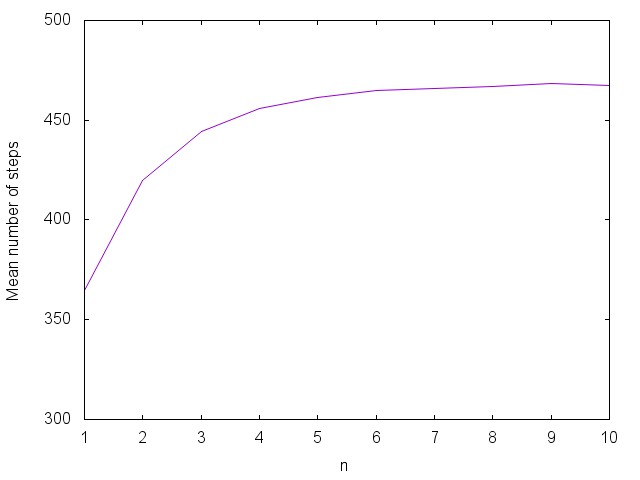}}
\subfloat[Distribution of \(\mathcal{T}^{K,\epsilon}_\varphi\)  (\(\epsilon = 1/2^{10}\)).]{
\includegraphics[width=0.48\textwidth]{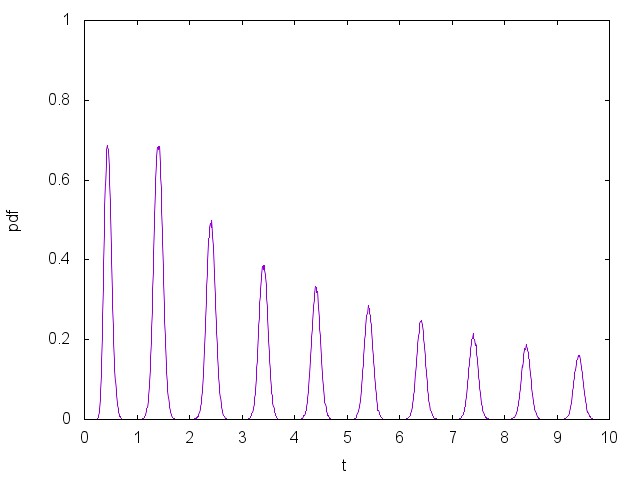}}
\end{center}
\caption{First hitting time of \(\varphi(t)=\alpha+\beta\cos(\omega t)\) by an Ornstein Uhlenbeck process solution of \eqref{eq:orn} ($\alpha=2$, $\beta=1$, $\omega=2\pi$, $\lambda=0.5$, \(K=10\)).}\label{fig:oudeuxieme}
\end{figure}

\begin{figure}[h]
\begin{center}
\includegraphics[width=0.48\textwidth]{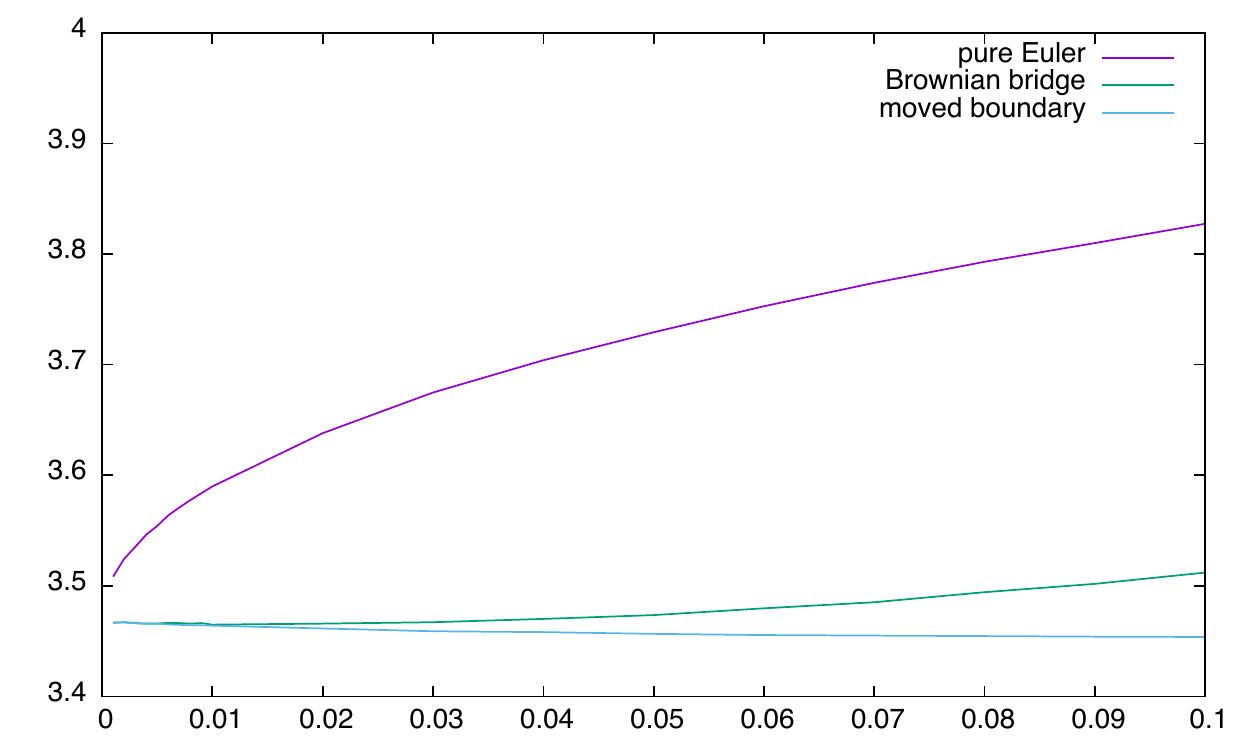}
\end{center}
\caption{Bias for Euler schemes to approximate \(\mathbb{E}(\mathcal{T}_\varphi^K)\), the mean first hitting time of \(\varphi(t)=\alpha+\beta\cos(\omega t)\) by an Ornstein Uhlenbeck process solution of \eqref{eq:orn} ($\alpha=2$, $\beta=1$, $\omega=2\pi$, $\lambda=0.5$, \(K=5\)).}\label{fig:compare_euler_schemes}
\end{figure}

\clearpage

\bibliographystyle{abbrv}
\bibliography{biblio}

\end{document}